\numberwithin{equation}{section}
\providecommand{\U}[1]{\protect\rule{.1in}{.1in}}
\newtheorem{theorem}{Theorem}[section]
\newtheorem{corollary}[theorem]{Corollary}
\newtheorem{lemma}[theorem]{Lemma}
\newtheorem{remark}[theorem]{Remark}
\newenvironment{proof}[1][Proof]{\noindent\textbf{#1.} }{\ \rule{0.5em}{0.5em}}
\renewcommand{\div}{\mathrm{div}_{\mathbf{x}}}
\newcommand{\vc}[1]{\mathbf{#1}}
\newcommand{\ignore}[1]{}
\date{}
\title{A thin film model for meniscus evolution}
\author[1]{Amrita Ghosh \thanks{ghosh@iam.uni-bonn.de}}
\affil[1]{Institute for Applied Mathematics, University of Bonn \authorcr Endenicher Allee 60, 53115 Bonn, Germany}
\author[1]{Juan J.L. Vel\'azquez \thanks{velazquez@iam.uni-bonn.de}}
\begin{document}
	\maketitle
	
	\begin{abstract}
		In this paper, we discuss a particular model arising from sinking of a rigid solid into a thin film of liquid, i.e. a liquid contained between two solid surfaces and part of the liquid surface is in contact with the air. The liquid is governed by Navier-Stokes equation, while the contact point, i.e. where the gas, liquid and solid meet, is assumed to be given by a constant, non-zero contact angle. We consider a scaling limit of the liquid thickness (lubrication approximation) and the contact angle between the liquid-solid and the liquid-gas interfaces is close to $\pi$.	
		%We show that under lubrication approximation and no-slip condition at the fluid-solid interface, 
		This resulting model is a free boundary problem for the equation $h_t + (h^3h_{xxx})_x = 0$, for which we have $h>0$ at the contact point (different from the usual thin film equation with $h=0$ at the contact point).
		We show that this fourth order quasilinear (non-degenerate) parabolic equation, together with the so-called partial wetting condition at the contact point,
		is well-posed. Furthermore, the contact point in our thin film equation can actually move, contrary to the classical thin film equation for a droplet arising from the no-slip condition. Additionally, we show the global stability of steady state solutions in a periodic setting.
	\end{abstract}
	
	\section{Introduction}

	In this paper, we investigate a particular thin film model, where a rigid solid enters a liquid film (cf. Figure \ref{fig3}), leading to movement of the contact point (where the gas, liquid and solid meet) and the formation of a meniscus, as the initial state is out of equilibrium. This free boundary problem gives rise to the following fourth order non-linear parabolic equation, in one dimension,
	\begin{subequations}
		\begin{align}
			\partial_t h + \partial_x(h^3 \partial^3_xh) = 0 \qquad \text{ in } x>\Lambda, t>0, \label{0}
		\end{align}
	\end{subequations}
	along with the boundary conditions,
	\begin{equation}
		%\begin{align}
		h= g, \qquad \partial_x h =\partial_x g-k, \qquad h\;\partial_x^3h = -\frac{2\Lambda \;\partial_t g}{g^2} \quad \qquad \text{ at } x=\Lambda, t>0. \tag{1.1b}\label{0_BC}
		%\end{align}
	\end{equation}
	
	\begin{figure}[h!]
		\centering
		\begin{minipage}{.49\textwidth}
			\centering
			\includegraphics[width=.8\linewidth]{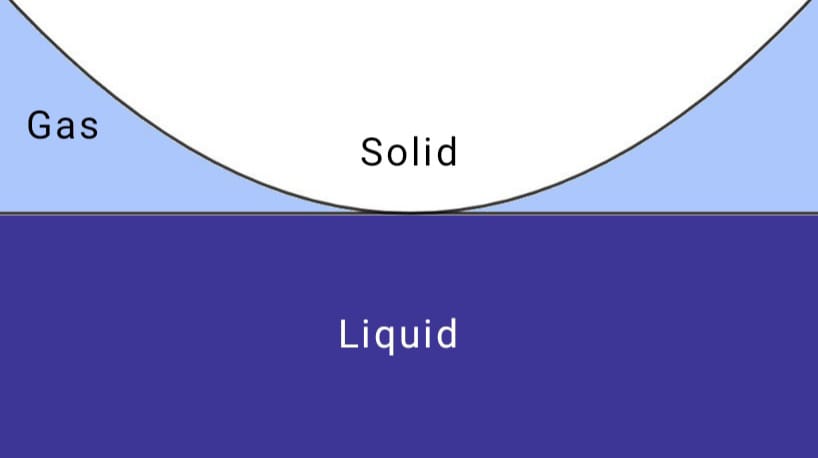}
		\end{minipage}%
		\begin{minipage}{.44\textwidth}
			\centering
			\includegraphics[width=.85\linewidth]{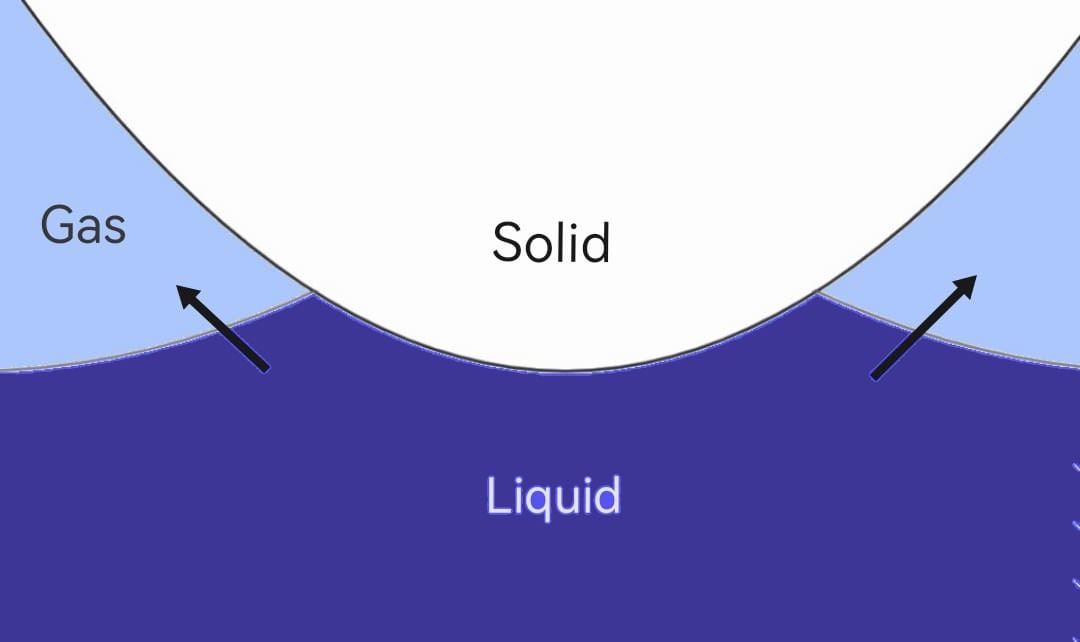}
		\end{minipage}
		%\begin{minipage}{.33\textwidth}
		%	\centering
		%	\includegraphics[width=.6\linewidth]{3}
		%\end{minipage}
		\caption{General meniscus formation}
		\label{fig3}
	\end{figure}
	\FloatBarrier
	
	Here,
	\begin{itemize}
		\item $h = h(x,t)$ is the height of the liquid film;
		\item $\Lambda = \Lambda(t)$ is the contact point (which is a free boundary in one dimension);
		\item $k$ is the contact angle (rescaled), i.e. the angle between the liquid-solid and the liquid-gas interfaces; and
		\item $g = g(x,t)$ is the profile of the rigid solid, with a vertically downward motion only.
	\end{itemize}
	The above condition on solid means that $\partial_{xt}g =0$ for all $x,t$.
	
	Formal derivation of the above system is elaborated in Section \ref{S2}. Here the situation in one spatial dimension is studied only, although one may consider general dimension as well (physically relevant cases correspond to $d=1$ or $2$). The study of fluid problems involving free boundary evolution mostly concerns with an interface between two phases (cf. \cite{Solonnikov77}), whereas the interest of the current paper is the situation where three phases of matter meet (typically, liquid-solid-air), generating a \textit{contact line/point} (also known as \textit{triple junction}). The main differences between the model considered here with other classical triple junction models is pointed out later.
	
	\paragraph{Equation (\ref{0}):}	Let us first discuss  equation
	(\ref{0}) which can be viewed as a particular case of the general class of thin film equations
	\begin{equation}
		\label{general_TFE}
		\partial_t h + \partial_x(f(h)\partial_x^3 h) =0 \quad \text{ in } \{h>0\}.
	\end{equation}
	%	Some of the important special cases are: $f(h) = h$ which corresponds to the Darcy's flow in the Hele-Shaw cell and $f(h) = \beta h^n+h^3, n\in (0,3], \beta\ge 0$, depending on the type of boundary conditions imposed at the fluid-solid interface (for the Navier-Stokes equation, not in the thin film setting).
	Equations of the form (\ref{general_TFE}) are typically obtained by lubrication approximation of the motion of a two-dimensional viscous liquid droplet, spreading over a solid substrate by the effect of surface tension (cf. \cite{Bertozzi98}). Classical macroscopic fluid mechanics to model such phenomenon imposes the no-slip boundary condition at the liquid-solid interface, that is the velocity of the liquid must be equal to that of the solid substrate in contact with. This corresponds to the case $f(h) = h^3$. However, this condition cannot hold at the moving contact point/triple junction where the solid, liquid and gas, three phases meet. It is known to give rise to a non-integrable singularity at the contact point (cf. \cite{HS}) which further implies that boundary of the film (i.e. the triple junction) cannot move in time (\textit{no slip paradox}). On other hand, Navier slip condition (corresponding to the case $f(h) = h^2$), proposed by Navier \cite{navier}, which allows the liquid to slip over the solid surface (characterized by a slip parameter), resolves such paradox (cf. \cite{ODB} for a discussion in this direction).

	Note that equation (\ref{general_TFE}) can also be viewed as a fourth order version for the elliptic part of the porous medium equation (with different sign)
	\begin{equation*}
		\partial_t h - \partial_x(h^n \partial_xh) =0, \quad n>0.
	\end{equation*}
	However, the standard techniques used for such second order degenerate parabolic equations (such as a maximum principle) are not available for the higher order counterpart which makes the analysis more difficult for the latter (cf. \cite{GKO08}).
	
	\paragraph{Equation (\ref{0_BC}):}	Next let us discuss the boundary conditions in (\ref{0_BC}). The first condition gives the position of the free boundary, whereas the second one determines the slope at the free boundary (contact angle condition). The case where the (equilibrium) contact angle $k$ is given by a non-zero constant, determined by \textit{Young's law} (balance between interfacial energies) (cf. \cite{degennes}), is known as \textit{partial wetting}, whereas the zero contact angle is referred as \textit{complete wetting}. This work takes into account both possibilities. There are also notions of dynamic/apparent contact angle (cf. \cite{hocking92, RenE11}). The third boundary condition comes from a matching condition with the internal region (i.e. the liquid region bounded between solid parts), while in the standard setting of a droplet, a third condition is deduced from matching the velocity of the contact point. This would lead to an over-determined problem for a fourth order operator with fixed boundary, but not for the free boundary problem where the boundary is an unknown.
	
	\textit{State of the art.} All the literature available in the context of the thin film equation for contact angle problems consider either compactly supported initial data (droplet case) or an unbounded support with single free boundary. Thus the standard condition $h=0$ of the vanishing liquid height at the free boundary makes the equation $\partial_t h + \partial_x(h^n \partial^3_x h)=0$ singular/degenerate. Following the pioneering work of Bernis and Friedman \cite{BF90}, several results 
	%global existence theory of weak solutions to (\ref{general_TFE}) 
	have been established for (\ref{general_TFE}),  considering different forms of $f$, %in different spatial dimensions and for both complete and partial wetting case; For an overview see
	for example in \cite{GKO08}, \cite{GGKO14}-\cite{gnann18}, \cite{hans11}-\cite{KM15}, \cite{seis18}. %. However, such weak solutions neither provide uniqueness nor enough regularity to satisfy condition like $(\ref{0_BC})_3$ pointwise. Therefore, well-posedness and regularity of classical solutions of (\ref{general_TFE}) with $f(h) = h^n, n\in (0,3),$ has been developed recently in a series of works: linear mobility case ($n=1$) is studied in  \cite{GKO08, GK10, gnann15, GIM19}, for zero contact angle condition in $1$-space dimension, in \cite{KM13, KM15}, for non-zero contact angle and in \cite{seis18}, for higher spatial dimension, while quadratic mobility case ($n=2$) has been studied in  \cite{GGKO14, gnann16}, for complete wetting regime, and in \cite{hans11}. for partial wetting.
	%More qualitative questions have been studied as well, such as asymptotic behavior (self similar solution), waiting time phenomena etc.
	We also point out the work \cite{GT} where stability of a global solution for the general contact line problem for the full Stokes equation has been discussed. Furthermore, the works \cite{BL22, DP20, GPSW20, IL21} discuss water wave problems for floating bodies (more generally, wave-structure interactions) which primarily concern the inviscid fluid governed by Euler equation, and driven by the inertial forces; Whereas the dominant force for the system in the current paper is the viscosity, modelled by Navier-Stokes equation. As a result, the water wave models are hyperbolic in nature while the contact angle problem considered in the current work is parabolic. On the other hand, rigorous derivation of thin film approximations of free boundary problems for the Stokes equation has been studied by several people, e.g. in \cite{CM12}.	
	%thin film approximations of free boundary problems for the Stokes equation has been studied in physical and mathematical literature extensively 
	%Rigorous derivation of thin film approximations
	The above references are not claimed to be exhaustive.
	
	The current work is concerned with a different thin film model, deduced from the no-slip boundary condition assumption at the liquid-solid interface. The film height being $h>0$ at the contact point makes our model non-degenerate, unlike the standard thin film equation as discussed above. The result shows well-posedness as well as the possibility of a \textit{moving} contact point. Moreover, no paradox arises in spite that the contact point moves.
	%	 contrary to the above mentioned no-slip paradox. 
	The reason of such behavior in this case is
	% Our understanding of occurrence of such behavior is 
	that the contact angle is very close to $\pi$. As pointed out by Solonnikov in \cite{solonnikov95}, the paradox gets resolved in the case of contact angle $\pi$. % does not yield any paradox. %it is known that the triple junction can move in spite of no-slip boundary condition at the fluid-solid interface.
	%From the mathematical point of view, the model is more regular than the classical model with slip condition, and due to the fact that $h>0$ at the contact point, the model is parabolic near the contact point.
	The main contribution of the paper is to bring forward this particular interesting yet simple situation involving the evolution of triple junction and the discussion of different behavior in contrast to the known cases.
	%This might lead to the discussion of considering different boundary condition (other than slip) and suitable materials for coating processes which may create such contact angle and in turn result into better efficacy.
	This observation intrigued our curiosity to analyze the model in detail.
	
	\textit{Structure of the paper.} We first deduce a general thin film model describing the above particular situation of meniscus formation under the classical approach in Section \ref{S2}. The arguments used in this derivation are standard. Still we write this derivation in detail, since we did not find any suitable reference for such a model. In Section \ref{S3}, the local well-posedness result is established, for the particular case of a no-slip boundary condition. The system being non-degenerate, standard theory for fourth order parabolic quasilinear equations can be employed. Nevertheless, obtaining suitable regularity (in time) in order to closing the final fixed point argument is not straight-forward (cf. Remark \ref{Rem1}). %the choice of solution spaces in order to closing the fixed point argument is not straight-forward. %(as mentioned in the beginning of Section \ref{S3.1}).
	Finally we obtain the asymptotic behavior of a solution, in the periodic setting, in Section \ref{S4}. By looking at perturbations of a steady state of (\ref{0})-(\ref{0_BC}), existence of a unique global solution is obtained with the help of a priori energy estimates. As long as the initial data remains close (in a certain norm) to the stationary solution, the local solution of the thin film model converges to the steady state for all time. Such stability result of the so-called propagating fronts in approximated version of compressible Navier-Stokes equations have been considered in \cite{DP20}.
	
	\subsection{Main results}
	%Let us state here the main results of this work. For the functional notations, we refer to Section \ref{S3.1}.
	Our first result concerns a formal derivation of the lubrication model of meniscus formation and contact point evolution as a solid body immerses into a thin film of incompressible, viscous liquid, surrounded by gas. % with respect to a small thickness of the liquid.
	Refer to Section \ref{S2} and beginning of Subsection \ref{S3.2} for the details.
	
	\begin{theorem}
		The following system of thin film equations with moving contact point and fixed contact angle (including the complete wetting case) is obtained from the classical fluid dynamical equations, coupled with rigid solid and gas, under no-slip boundary condition at the liquid-solid interface, as
		\begin{equation}
			\label{7}
			\begin{aligned}
				\partial_{t} h + \partial_{x}( h^3 \partial^3_{x} h ) =0 \qquad &\text{ in } x> \Lambda(t), t>0,\\
				h= g, \quad \partial_{x} h = \partial_x g -k, \quad h \;\partial_{x}^3 h = -2\Lambda\frac{\partial_t g}{ g^2} \qquad &\text{ at } x = \Lambda(t), t>0,\\
				h \to 1 \qquad &\text{ as } \ x\to\infty, t>0,\\
				h =h_0 \qquad &\text{ at } \ t=0, x>\Lambda_0. 
			\end{aligned}
		\end{equation}
		Here $\Lambda_0 = \Lambda(0)$ denotes the initial position of the contact point.
	\end{theorem}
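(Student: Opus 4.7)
The plan is to carry out a formal matched asymptotic expansion of the incompressible Navier--Stokes system coupled to the descending rigid solid, with a small parameter $\varepsilon$ measuring the ratio of the film thickness to the horizontal length scale. I would first write out the full moving-boundary problem in the liquid domain bounded by the solid surface, the free liquid--gas surface $y = h(x,t)$, and terminating at the contact point $x = \Lambda(t)$; the boundary data consist of the no-slip condition on the solid, the kinematic and stress (with surface tension) conditions on the free surface, the geometric contact condition at $x = \Lambda$, a contact-angle prescription close to $\pi$, and the far-field condition $h \to 1$.

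After non-dimensionalizing and rescaling the transverse coordinate, the transverse velocity, the pressure, and time by the appropriate powers of $\varepsilon$, I would expand all fields as formal series in $\varepsilon$ and collect orders. At leading order the momentum balance reduces to $\partial_y p = 0$ together with $\mu\,\partial_y^2 u = \partial_x p$, while the normal-stress condition on the free surface yields the Laplace pressure $p = -\gamma\,\partial_x^2 h$ in the lubrication limit. Integrating the resulting Poiseuille-type profile with no-slip at the solid and vanishing tangential stress at the free surface, and inserting the flux $Q$ into the depth-averaged incompressibility relation $\partial_t h + \partial_x Q = 0$, produces the bulk equation $\partial_t h + \partial_x(h^3\,\partial_x^3 h) = 0$ after a final rescaling that absorbs the physical constants.

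For the boundary conditions at $x = \Lambda(t)$: the first condition $h = g$ is immediate from the requirement that the free surface meets the solid at the contact point. The second condition $\partial_x h = \partial_x g - k$ encodes the partial-wetting requirement that the angle between the solid and the liquid--gas interface equals $\pi - \varepsilon k$; the slope difference, expanded to first order in $\varepsilon$, produces the $-k$ correction, with $k=0$ recovering the complete-wetting case. The third condition $h\,\partial_x^3 h = -2\Lambda\,\partial_t g/g^2$ is the least routine: it is obtained by matching the outer thin-film region to an inner region in which the liquid is trapped between two descending solid parts symmetric about $x=0$ of half-width $\Lambda$. Incompressibility of the bulk liquid in the inner region forces the horizontal flux leaving that region to balance the rate at which the solid displaces liquid, namely $2\Lambda\,\partial_t g$; equating this with the lubrication flux $h^3\,\partial_x^3 h$ at $x=\Lambda$ and using $h = g$ there yields the stated form.

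The main obstacle is precisely this matching between the outer thin-film expansion and the inner bulk region near the contact point: one must identify the correct leading-order reduction of the Stokes flow in the inner region, confirm that the relevant balance for the rigid-solid volume displacement really selects the claimed flux, and verify that the cascade of $\varepsilon$-powers in the inner--outer coupling is consistent. Once this is settled, the initial datum $h(\cdot,0) = h_0$ and the far-field condition $h \to 1$ are inherited directly from the original problem with no additional work, and the assumption $\partial_{xt} g = 0$ enters only as a compatibility requirement for the vertical rigid motion of the solid.
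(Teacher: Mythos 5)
Your proposal follows essentially the same route as the paper: lubrication scalings with $\theta=k\varepsilon$, leading-order reduction to $\partial_y p=0$, $\partial_y^2 u=\partial_x p$, $p=-\partial_x^2 h$, depth-integrated continuity for the bulk equation, the geometric condition $h=g$ and the small-slope expansion of the contact angle for the second boundary condition, and continuity of mass flux at $x=\Lambda$ with the interior (solid-covered) region for the third. The one place where you genuinely deviate is how the interior flux is computed: the paper solves the interior lubrication problem explicitly — no-slip on both $y=0$ and $y=g$, the kinematic condition on the solid underside, and the symmetry condition $u=0$ at $x=0$ give the ODE (\ref{ODE}) for the profile coefficient $A$, solved via $\partial_{xt}g=0$ as $A=-6x\,\partial_t g/g^2$, and then the two flux integrals (\ref{39})--(\ref{40}) are equated — whereas you replace this by a global volume-displacement balance in the inner region. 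That shortcut is legitimate for the no-slip case (mass conservation alone fixes the one-sided flux), and it is arguably cleaner; what the paper's explicit interior solution buys is the velocity profile itself, which is what allows the extension to general Navier slip in Remark 2.1, and it is also where $\partial_{xt}g=0$ is genuinely used (to integrate the ODE), not merely as a "compatibility requirement" as you suggest.

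One bookkeeping point needs fixing in your matching. By the symmetry about $x=0$ that you yourself invoke, only half of the displaced volume exits through $x=\Lambda$: the flux through the section $x=\Lambda^-$ is $-\Lambda\,\partial_t g$, not $-2\Lambda\,\partial_t g$ (equivalently, in the paper's notation, $\tfrac16 A h^2$ with $A(\Lambda)=-6\Lambda\partial_t g/g^2$). Moreover the one-sided outer flux produced by the no-slip Poiseuille profile is $\tfrac13 h^3\partial_x^3 h$, the factor $\tfrac13$ being removed in the paper only later by the time rescaling $t\to 3t$ in Section \ref{S3.2}. Your statement that the total displacement $2\Lambda\,\partial_t g$ balances the flux $h^3\partial_x^3 h$ therefore lands on the constant in (\ref{7}) only because two omissions compensate; to make the derivation sound you should match $-\Lambda\,\partial_t g$ against $\tfrac13 h^3\partial_x^3 h$ and then track the rescaling that normalizes the bulk equation. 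This is a constant-tracking slip rather than a conceptual gap, but since the third boundary condition is precisely the delicate output of the matching, it is the step you should write out carefully.
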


	Next, existence of a local in time solution to (\ref{7}) is deduced.
	
	\begin{theorem}[Heuristic version of Theorem \ref{Th2}]
		\label{Th2.}
		For any initial data $(h_0,\Lambda_0)$, satisfying compatibility conditions, there exists a unique strong solution $(h,\Lambda)$, for short times, of the free boundary problem (\ref{7}).	
	\end{theorem}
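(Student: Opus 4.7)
The plan is to recast (\ref{7}) as a fixed-point problem for the pair $(h,\Lambda)$ on a short time interval. Since the governing equation is a non-degenerate fourth-order quasilinear parabolic equation on a moving half-line, the strategy is (i) to flatten the domain, (ii) to linearize around a reference profile, (iii) to apply maximal regularity for fourth-order parabolic equations, and (iv) to close the loop with a Stefan-type ODE for the contact point.

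\emph{Flattening and perturbative formulation.} Setting $y = x - \Lambda(t)$ and $v(y,t) = h(y+\Lambda(t),t)$ converts (\ref{7}) into
\begin{equation*}
\partial_t v - \dot\Lambda(t)\,\partial_y v + \partial_y\!\bigl(v^{3}\partial_y^{3} v\bigr) = 0 \quad\text{in}\ y>0,\ t>0,
\end{equation*}
with boundary values of $g$ evaluated at $x=\Lambda(t)$, the far-field $v \to 1$, and initial datum $v_0(y)=h_0(y+\Lambda_0)$. Writing $v = V + w$ for a smooth reference profile $V$ that matches the boundary data and the far-field, the unknown $w$ satisfies
\begin{equation*}
\partial_t w + V(y,t)^{3}\,\partial_y^{4}w = \mathcal{F}[w,\Lambda],
\end{equation*}
with essentially homogeneous boundary data at $y=0$. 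Since the compatibility conditions force $h_0(\Lambda_0)=g(\Lambda_0,0)>0$, the coefficient $V^{3}$ stays uniformly positive on $[0,T]$ for $T$ small, so the principal part is a non-degenerate fourth-order parabolic operator.

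\emph{Contact-point dynamics and fixed point.} Differentiating the Dirichlet trace $v(0,t) = g(\Lambda(t),t)$ in time and using $\partial_{xt}g \equiv 0$ yields
\begin{equation*}
\dot\Lambda(t) = \frac{\partial_t v(0,t) - \partial_t g(\Lambda(t),t)}{k},\qquad \Lambda(0)=\Lambda_0.
\end{equation*}
The third boundary condition $v\,\partial_y^{3}v = -2\Lambda\partial_t g/g^{2}$ acts as a matching condition which, together with $\partial_y v(0,t)=\partial_x g-k$, supplies the two boundary equations needed at $y=0$ for the fourth-order linearised problem. On a short interval $[0,T]$ I would fix a candidate $\tilde\Lambda$ in a closed ball of a parabolic Sobolev space $W^{4,1}_{p}$ (or a parabolic H\"older space $C^{4+\alpha,(4+\alpha)/4}$), solve the linearised problem for $w$ with coefficients frozen along $\tilde\Lambda$ using maximal regularity in the Solonnikov--Ladyzhenskaya--Ural'ceva scale, and update $\Lambda$ through the Stefan ODE above. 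For $T$ small the map $\tilde\Lambda\mapsto \Lambda$ is a contraction, producing a unique strong solution.

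\emph{Main obstacle.} The essential difficulty, flagged in Remark~\ref{Rem1}, is to choose function spaces in which both sides of the iteration have matching regularity. The Stefan law prescribes $\dot\Lambda$ in terms of the boundary trace of $\partial_t v$, which is exactly one time derivative beyond what the baseline maximal regularity for fourth-order parabolic equations controls. To close the contraction one must either work slightly above the natural threshold in an anisotropic scale and exploit suitable trace and interpolation inequalities, or invoke the PDE itself together with the third boundary condition to trade a spatial derivative for a temporal one, thereby recovering the missing regularity of $\dot\Lambda$. Verifying strict positivity of $v$ on $[0,T]$ and Lipschitz continuity of $\mathcal{F}$ in the chosen space are more routine once this balance has been achieved; uniqueness then follows from the linear theory combined with the contraction estimate.
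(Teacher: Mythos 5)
Your scaffolding (flatten the moving half-line, lift the boundary data with a reference profile, treat the resulting non-degenerate fourth-order quasilinear problem for a prescribed $\Lambda$, then close with a fixed point for the contact point) matches the paper's strategy in outline, but the proposal stops exactly at the step that carries the real content. First, your Stefan law is off: differentiating $v(0,t)=g(\Lambda(t),t)$ in the flattened variable gives $\dot\Lambda\,\partial_x g(\Lambda,t)=\partial_t v(0,t)-\partial_t g(\Lambda,t)$; the factor $k$ appears only if one uses the Eulerian derivative $\partial_t h(\Lambda,t)$, since $\partial_x g-\partial_x h=k$ at the contact point. More importantly, driving $\dot\Lambda$ by the boundary trace of $\partial_t v$ is precisely the formulation that loses regularity: as you note yourself, maximal regularity for the fourth-order problem does not control that trace at the level needed to make the map $\tilde\Lambda\mapsto\Lambda$ a contraction in the same space. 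Saying one must ``either work slightly above the natural threshold or trade a spatial derivative for a temporal one'' names the obstruction (this is exactly Remark \ref{Rem1}) but does not overcome it, so the fixed point does not close as written.

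The paper resolves this by never differentiating the trace identity. It keeps the Dirichlet condition (\ref{12}) itself as the fixed-point equation, represents the solution by the Green function of the linear problem (Lemma \ref{Lem4}, formula (\ref{14})), and exploits mass conservation, $\int_0^\infty G(0,t,y,\tau)\,\mathrm{d}y=1$, to evaluate the leading boundary term \emph{exactly}: the contribution $\int_0^t \dot\Lambda(\tau)\,\psi_2(\Lambda(\tau),\tau)\,\mathrm{d}\tau$ integrates to $g(\Lambda(t),t)-g(0,0)-k\Lambda(t)$, the $g(\Lambda,t)$ cancels against $\psi_1(\Lambda,t)$, and one is left with $\Lambda(t)=\tfrac1k[\cdots]$ as in (\ref{17}), where every remaining term is made small for small $\delta$ and $T$ using the Gaussian bounds (\ref{Green_estimate}) and the support of the cut-off. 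This yields $\Lambda$ directly (not $\dot\Lambda$), never requires the boundary trace of $\partial_t U$, and the contraction runs in $C^{1+\frac{\alpha}{4}}[0,T]$ with no loss of time regularity. To salvage your scheme you would need to supply an analogous device, or actually carry out a higher-regularity maximal-regularity argument with compatible traces; without one of these the crucial step of your proof is missing.
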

	
	A rigorous statement and proof of the above theorem is given in Section \ref{S3}.
	Next %result asserts global solution of 
	we consider the system (\ref{7}) over an interval $(0, 2L)$ (in order to have a bounded domain), symmetric with respect to $x=L$ and with periodic boundary conditions (cf. the beginning of Section \ref{S4} for further discussion on the choice of such domain). Precisely,
	\begin{equation}
		\label{25}
		\begin{aligned}
			\partial_{t} h + \partial_{x}( h^3 \partial^3_{x} h ) =0 \qquad \ &\text{ in } \ x \in (\Lambda, L), t>0,\\
			h= g, \quad \partial_{x} h = \partial_x g -k, \quad h \;\partial_{x}^3 h = 0 \qquad \ &\text{ at } \ x = \Lambda, t>0,\\
			\partial_xh =0 \qquad \ &\text{ at } \ x = L, t>0,\\
			h = h_0 \qquad &\text{ at } \ t=0, x \in (\Lambda_0, L),\\
			h(x,t) = h(x+2L,t), \quad g(x, t) = g(x+2L, t), \qquad & \qquad  x, t>0.
		\end{aligned}
	\end{equation}
	
	\noindent The free interface $h(x,t)$ is defined over the domain $(\Lambda, 2L- \Lambda)$ in this case. Also we assume here that the solid is not moving, i.e. $\partial_t g=0$, leading to null flux boundary condition. Such a solution preserves mass for all time $t>0$ (cf. Section \ref{S4.1}). Then, for a given volume $V_0$, a steady state $(\overline{h}, \overline{\Lambda})$ of (\ref{25}) is characterized by
	\begin{equation*}
		\partial_{x}( \overline{h}^3 \partial^3_{x} \overline{h} ) =0 \ \text{ in } \ x \in (\overline{\Lambda}, L),
	\end{equation*}
	subject to the boundary conditions
	\begin{equation*}
		\overline{h} = g, \ \partial_{x} \overline{h} = \partial_x g-k, \ \partial_{x}^3 \overline{h} = 0 \quad \text{ at } x = \overline{\Lambda} \quad \text{ and } \quad
		\partial_x \overline{h}=0 \quad \text{ at } x=L.
	\end{equation*}
	We refer to (\ref{equil_sol}) for an explicit description of the steady state.
	Stability of this steady state for small perturbation is obtained as below. We refer to Section \ref{S4.5} for rigorous statement and proof.
	
	\begin{theorem}[Heuristic version of Theorem \ref{Th3}]
		\label{Th3.}
		For initial data close enough to the steady state of (\ref{25}), there is a unique global solution $(h, \Lambda) $ of the problem (\ref{25}) which converges to the steady state as $t\to\infty$.
	\end{theorem}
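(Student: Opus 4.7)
The plan is to combine the local well-posedness of Theorem \ref{Th2} with an energy/dissipation estimate on a Lyapunov functional and then close a continuation argument to extend the local solution globally and show convergence to the steady state. As a first step I would fix the domain to remove the free boundary: exploiting the symmetry about $x=L$, it suffices to work on $(\Lambda(t),L)$ with the Neumann condition $\partial_x h(L,t)=0$, and a change of variable such as $y=(x-\Lambda(t))/(L-\Lambda(t))$ maps this onto a fixed interval $(0,1)$. Writing the perturbation $(\tilde H,\tilde\Lambda)=(H-\bar H,\Lambda-\bar\Lambda)$, where $\bar H$ is the steady state from (\ref{equil_sol}) expressed in the new variable, one obtains a quasilinear fourth-order parabolic system for $\tilde H$ coupled to a scalar ODE for $\tilde\Lambda$ with zero as a fixed point.

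The key analytic input is the gradient-flow structure of the problem: formally $\partial_t h+\partial_x(h^3\partial_x^3 h)=0$ is the $h^{-3}$-weighted $H^{-1}$ gradient flow of $\int \tfrac12(\partial_x h)^2\,dx$, and together with the partial-wetting condition the natural Lyapunov functional is
\begin{equation*}
\mathcal{E}(h,\Lambda)=\int_{\Lambda}^{2L-\Lambda}\tfrac12(\partial_x h)^2\,dx+\mathcal{S}(\Lambda),
\end{equation*}
where $\mathcal{S}(\Lambda)$ is the solid/gas line-tension contribution (affine in $\Lambda$) whose first variation reproduces the Young-type condition $\partial_x h=\partial_x g-k$. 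Using the null-flux condition $h\,\partial_x^3 h=0$ at $\Lambda$, the symmetry condition at $L$, and mass conservation, a moving-boundary Leibniz computation should give the dissipation identity
\begin{equation*}
\frac{d}{dt}\mathcal{E}(h,\Lambda)=-\int_{\Lambda}^{2L-\Lambda} h^3(\partial_x^3 h)^2\,dx,
\end{equation*}
with all non-dissipative terms at $x=\Lambda$ cancelling exactly against $\mathcal{S}'(\Lambda)\dot\Lambda$. The steady state $(\bar h,\bar\Lambda)$ is then characterised as the unique minimiser of $\mathcal{E}$ on the constraint slice of fixed volume.

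The remaining ingredients are coercivity and a continuation bootstrap. For perturbations $(\tilde h,\tilde\Lambda)$ obeying the linearised mass and contact-angle constraints, one needs a lower bound of the form $\mathcal{E}(h,\Lambda)-\mathcal{E}(\bar h,\bar\Lambda)\geq c(\|\tilde h\|_{H^1}^2+|\tilde\Lambda|^2)$, which reduces to a constrained Poincar\'e inequality on the fixed reference interval. Combined with a dissipation comparison $\int h^3(\partial_x^3 h)^2\,dx\geq c'(\mathcal{E}(h,\Lambda)-\mathcal{E}(\bar h,\bar\Lambda))$, valid as long as $h\geq \bar h/2>0$, this yields exponential decay of the excess energy and hence of $|\Lambda(t)-\bar\Lambda|$ and $\|h(t)-\bar h\|_{H^1}$. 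For global existence I would apply Theorem \ref{Th2} iteratively: its existence time depends only on the distance of the initial data to the steady state in an appropriate higher-order norm, and the energy estimate, together with parabolic smoothing for the linearised operator, controls these norms as long as the solution stays close to $(\bar h,\bar\Lambda)$; taking the initial data small enough ensures the solution never leaves this neighbourhood and extends for all $t>0$.

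The main obstacle, in my view, is the careful bookkeeping of boundary contributions when differentiating $\mathcal{E}$. Integration by parts on the moving interval produces several boundary terms involving $h,\partial_x h,\partial_x^2 h,\partial_x^3 h$ at $x=\Lambda(t)$, and only the Young-law variation of $\mathcal{S}$ cancels automatically; the remaining contributions must be eliminated using a combination of the contact-angle condition, the null-flux condition, and the kinematic identity $\partial_t h(\Lambda,t)=k\,\dot\Lambda$ obtained by differentiating $h(\Lambda(t),t)=g(\Lambda(t))$ along the boundary. A second, related technical difficulty is the coercivity of $\delta^2\mathcal{E}$: the linearised contact-angle condition couples $\tilde h(\bar\Lambda)$ to $\tilde\Lambda$ through $\bar h'(\bar\Lambda)$, so the relevant Poincar\'e-type inequality must be proved on a constrained subspace rather than via a standard Dirichlet or Neumann argument.
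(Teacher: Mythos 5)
Your proposal follows essentially the same route as the paper: it fixes the free boundary by a linear change of variables, uses the interfacial-energy functional (liquid--gas plus $\Lambda$-dependent solid terms) as Lyapunov functional, derives the dissipation identity $\frac{\mathrm{d}}{\mathrm{d}t}E=-2b\int h^3|\partial_x^3h|^2$ with the boundary terms cancelling via the contact-angle condition, the null-flux condition, the kinematic identity $\partial_t h=k\dot\Lambda$ and Young's relation, then closes with a constrained Poincar\'e-type inequality (the paper's Lemma \ref{Lem1}) giving exponential decay of the excess energy, and finally a continuation argument in which parabolic smoothing (in the paper, the Green-function representation of Lemma \ref{Lem2}) controls the higher norms by the decaying $H^1$/energy norm and $|\Lambda-\overline{\Lambda}|$. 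The technical obstacles you single out (boundary bookkeeping in the dissipation identity and coercivity on the constrained subspace) are exactly the points the paper handles in (\ref{energy_relation}) and Lemma \ref{Lem1}--Corollary \ref{Cor1}.
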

	
	\section{Lubrication approximation}
	\label{S2}
	
	In this section, we elaborate the thin film approximation of the original three phases free boundary problem in consideration.\\
	
	\noindent \textbf{Original model}:
	Let us consider a horizontal, two-dimensional thin film of viscous, incompressible Newtonian liquid, of thickness $H$ at the initial time and a rigid
	%wedge-shaped
	solid (locally convex) %which initially
	touching the film %with an angle $\delta$
	(cf. Figure \ref{fig4}). %such that $\tilde{h}(t) = g( \frac{t}{t_0})^ n$with $n > 0$, where 
	As the solid enters the liquid, the contact point moves and makes an angle $(\pi-\theta)$, $\theta>0$, between the liquid-solid and the liquid-gas interface. The liquid-gas interface is described by $y = h(x,t)$ and the solid bottom is parametrized by $y = g(x,t)$.
	One may think of particular cases, for example $g = H\left( 1-\left( \frac{t}{t_0}\right)^n \right)$ with $n >0$ %(since $\tilde{h}$ decreases with time)
	where $t_0$ is the time scale that characterises the motion of the solid %wedge
	moving down. Here $n=1$ corresponds to the solid moving with constant velocity, while $n=2$ corresponds to the case of constant acceleration. %One may consider more general profile as well.
	A particular case of wedge-shaped solid (in the form $g(x,t) = \tilde{h}(t)+ c |x|$) has been mentioned in \cite[Fig 6]{shikh2020} in a context of a correct modelling approach describing the creation of a contact angle. 
	The free interface makes an angle $\tilde{\theta}$ with the horizontal plane (cf. Figure \ref{fig4}). Therefore, $\tan \tilde{\theta} =-\partial_xh$ and $\theta = -\arctan (\partial_xh) +\arctan (\partial_xg)$. To obtain the lubrication approximation, we need to assume that the two angles $\theta$ and $\tilde{\theta}$ are very small and of the same order of magnitude, which means that the contact angle $(\pi-\theta)$ is close to $\pi$ in this setting. The expected thin film model is obtained when the small parameter $\theta$ vanishes. %In particular, if $\theta =0$, the liquid-solid interface becomes tangent to the liquid-gas interface.
	Note that the thin film approximation would not be valid if $\tilde{\theta}= O(1)$. The contact points are given by $s_i = (\Lambda_i(t), g(\Lambda_i(t),t))$.
	Finally, we assume the far field condition $h \to H$ as $|x| \to \infty$.
	
	Let $\vc{v}^L = (u,v)$ be the velocity of the liquid, with constant density $\varrho^L$, viscosity $\mu^L$ and pressure $p^L$, governed by the Navier-Stokes equations, while the solid motion is governed by the translational velocity $\vc{v}^S(t)$ only, pointing vertically downward. Let us consider the bottom part of the liquid domain as a reference configuration, i.e. at $y = 0$.
	
	\begin{figure}[h!]
		\centering
		%	\begin{minipage}{.43\textwidth}
			%	\centering
			%\subcaptionbox{ \label{fig4}}{\includegraphics[width=.98\linewidth]{Meniscus5}}
			\includegraphics[width=.6\linewidth]{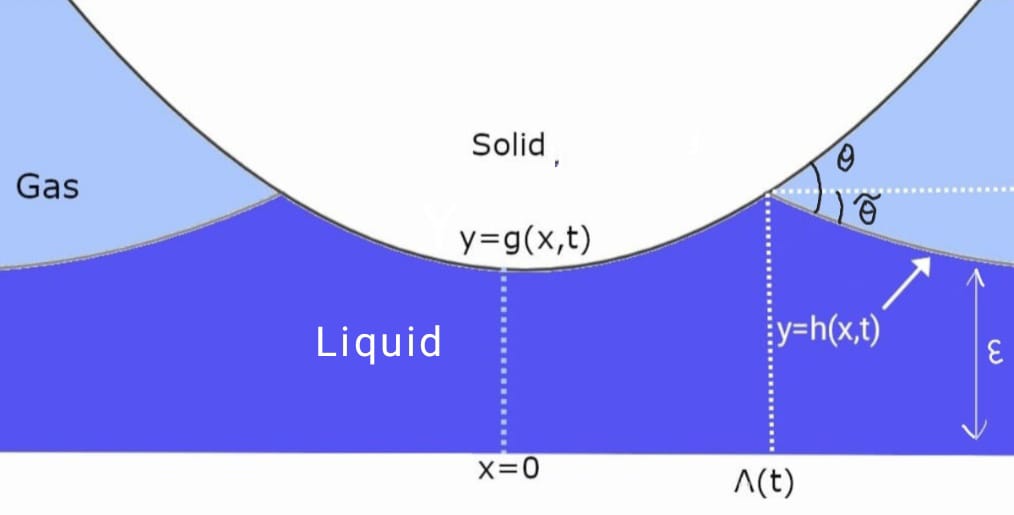}
			%	\end{minipage}
		\caption{Thin Film approximation with contact points}
		\label{fig4}
	\end{figure}
	\FloatBarrier
	As we assume a symmetric configuration with respect to the $y$-axis (for simplicity), it is enough to analyse the situation for $x > 0$ only. Therefore, in the following, we call the contact point $\Lambda$ instead of $\Lambda_1$.
	
	\paragraph{Further assumptions:} Let the solid bottom $g$ be smooth in a neighborhood of the initial contact point $\Lambda_0$. Moreover, $g>0$ for all $x, t\ge 0$, i.e. the solid never touches the bottom of the liquid domain which may lead to further singularities. Let us mention the work \cite{MR15} where such situation of the solid touching the liquid bottom has been considered. Additionally, we assume that the bottom of the solid fits the liquid at all time $t\ge 0$ which excludes the possibility of formation of air pockets. %Furthermore, let $\partial_x g>0$ for $x>0, t\ge 0$. This condition excludes the case of a (locally) flat solid, especially at the contact point which is not so interesting from mathematical point of view.
	%Let the distance between the %tip of the solid and the bottom of the liquid domain be denoted by $\tilde{h}(t)$ such that $\tilde{h}> 0$ for all time $t$.
	\\
	
	In the framework of classical fluid mechanics, conditions at the triple junction are considered only at equilibrium, i.e. in the situations where the contact point is not moving with respect to the bulk phases. Furthermore, we consider here the Navier slip condition at the liquid-solid interface, for a general framework, which covers both the no-slip and the full-slip condition. We formulate these conditions below (cf. \cite[Section 2.4]{GNV}).\\
	
	\noindent\textbf{in the liquid:}
	\begin{equation}
		\label{1}
		\div \;\vc{v}^L =0, \qquad \varrho^L\left( \partial_t \vc{v}^L + \vc{v}^L\cdot \nabla_{\mathbf{x}}\vc{v}^L\right) = \mu^L\Delta \vc{v}^L - \nabla_{\mathbf{x}} p^L;
	\end{equation}
	
	\noindent \textbf{at the liquid-gas interface: $y=h(x,t), x>\Lambda(t)$:}
	\begin{equation}
		%\begin{align}
		V_n = \vc{v}^L\cdot \vc{n}_1, \qquad
		\left( p^G - p^L\right) + 2\mu^L\left[ \mathbb{D}\vc{v}^L \cdot\vc{n}_1\right] \cdot \vc{n}_1 + p^s \kappa = 0, \qquad
		\left[ \mathbb{D}\vc{v}^L\cdot\vc{n}_1\right]_\tau = 0; \label{6}
		%\end{align}
	\end{equation}
	
	\noindent \textbf{at the liquid-solid interface $\{y=0\}\cup \{y= g(x,t), 0<x<\Lambda(t)\}$:}
	\begin{equation}
		%\begin{align}
		\left( \vc{v}^L - \vc{v}^S\right) \cdot \vc{n}_2 =0,\qquad 2 \mu^L\left[  \mathbb{D}\vc{v}^L\cdot\vc{n}_2\right]_\tau = \beta \left( \vc{v}^L_\tau - \vc{v}^S_\tau\right); \label{13}
		%\end{align}
	\end{equation}
	
	\noindent \textbf{at the triple junction $x=\Lambda(t)$:}
	\begin{equation}
		%\begin{align}
		h =  g,\qquad
		\arctan(\partial_x h) = \arctan(\partial_x g) - \theta. \label{2}
		%\end{align}
	\end{equation}
	Here $V_n$ is the normal velocity of the free boundary, $\mathbb{D} := \frac{1}{2}(\nabla_{\mathbf{x}} + \nabla_{\mathbf{x}}^T)$ is the symmetric gradient, $p^G$ is the constant pressure of the gas, $p^s$ is the (constant) surface tension, $\kappa$ is the curvature of the free interface, $\vc{n}_i, i = 1,2$ are the unit normal vectors, inward with respect to the liquid domain, and the subscript $\tau$ denotes the tangential component of a vector. We do not distinguish here between the slip-coefficients appearing at the lower and upper liquid-solid interface and denote both of them by $\beta$ only. In principle, they might be different. We further assume for simplicity that the lower boundary $y = 0$ is fixed, i.e. $\vc{v}^S = 0$ at $y = 0$.
	
	Also, we must have the matching condition for the initial data,
	\begin{equation}
		\label{initial_matching}
		h(x,0) = H \qquad \text{ at } t=0.
	\end{equation}
	Further, due to the symmetry of the considered configuration, we assume,
	\begin{equation}
		\label{symmetry_cond}
		u=0 \qquad \text{ on } x=0.
	\end{equation}
	
	Our goal here is to obtain the approximate model where the horizontal length scale of the system is much larger than the vertical length scale and the small angle $\theta$ is comparable to this ratio.
	Precisely, let us introduce the length scale as $L =\frac{ H}{\varepsilon}$ where $\varepsilon\ll 1$. Then the usual scaling of the variables for non-dimensionalization are given by,
	\begin{align*}
		&(\overline{x},  \overline{y}) = \left( \frac{x}{L}, \frac{y}{H}\right)  && \overline{\Lambda} = \frac{1}{L} \Lambda, && \overline{h} = \frac{h}{H}, \overline{g} = \frac{g}{H}, \\[.3cm]
		&\overline{t} = \frac{\sigma}{L \mu^L} t,  && (\overline{u}, \overline{v}) = \left( \frac{\mu^L}{\sigma} u, \frac{\mu^L}{\varepsilon \sigma} v\right) , &&
		\overline{p} =\frac{\varepsilon^2 L}{\sigma} p^L, \\[.3cm]
		&\theta =k \varepsilon, \quad && \overline{\beta} = \frac{\varepsilon L}{\mu^L}\beta, &&
	\end{align*}
	where $\sigma = -\varepsilon^3 p^s$ is the scaled surface tension. All the above scalings are chosen as per the usual approach for lubrication theory. In particular, we assume that $H$ and $\sigma$ are of order $1$, %$H = O(1)$, $\sigma = O(1)$, $\overline{\beta}= O(1)$,
	hence 
	\begin{equation*}
		\text{Reynolds number} =%\frac{\varrho^L \mathbb{U} H}{\mu^L}=
		\varepsilon^3 \overline{\text{Re}}, \qquad \text{ Capillary number} = %\frac{\mu^L \mathbb{U}}{p^s}=
		\varepsilon^3,
	\end{equation*}
	where $\overline{\text{Re}}$ is of order $1$. Here $\overline{\beta}$ is a constant and can take any value in $[0, \infty]$.
	\\
	
	\noindent \textbf{Asymptotic system}:	Under these scalings, we obtain the following system at the leading order of $\varepsilon$ from (\ref{1})-(\ref{2}),
	\begin{subequations}
		\begin{align}
			&\partial_{\overline{x}} \overline{u} + \partial_{\overline{y}} \overline{v} =0, \qquad \partial^2_{\overline{y}} \overline{u} = \partial_{\overline{x}} \overline{p}, \qquad \partial_{\overline{y}} \overline{p}=0, \label{19}\\
			& \hspace{5.38cm}\text{in } (\overline{x}, \overline{y}) \in ( 0, \overline{\Lambda} ) \times ( 0, \overline{g}) \cup ( \overline{\Lambda}, \infty) \times ( 0, \overline{h}), \nonumber\\
			& \nonumber\\
			&\partial_{\overline{t}} \overline{h} + \overline{u} \;\partial_{\overline{x}} \overline{h} -\overline{v} =0, \qquad \overline{p}=- \partial^2_{\overline{x}} \overline{h}, \qquad
			\partial_{\overline{y}} \overline{u}=0, \hspace{0.4cm}\qquad \text{ on } \overline{x}> \overline{\Lambda}, \; \overline{y}=\overline{h},	\label{41}\\
			&\overline{v}=0, \qquad \partial_{\overline{y}} \overline{u} =\overline{\beta} \;\overline{u}, \hspace{6.3cm}\qquad \text{ on } \overline{y}=0, \label{bottomBC}\\
			&\partial_{\overline{x}} \overline{g}\; \overline{u} = \overline{v} - \partial_{\overline{t}} \overline{g}, \qquad \partial_{\overline{y}} \overline{u} + \overline{\beta}\; \overline{u} =0, \hspace{2.8cm}\qquad \text{on } \overline{x} < \overline{\Lambda}, \; \overline{y}= \overline{g}, \label{uppersolidBC}\\
			&\overline{h} = \overline{g}, \qquad
			\partial_{\overline{x}} \overline{h}=\partial_{\overline{x}} \overline{g} -k, \hspace{5.59cm}\qquad \text{ at } \overline{x} = \overline{\Lambda}, \label{11}\\
			&\overline{h} \to 1 \hspace{9cm}\qquad \text{as } \overline{x}\to\infty, \label{11.}
		\end{align}
	\end{subequations}
	%	with the rescaled slip coefficient $\overline{\beta} = \frac{\varepsilon \beta L}{\mu^L}$.
	The system is complemented by equations (\ref{initial_matching}), (\ref{symmetry_cond}).
	
	Indeed, let us first write the governing equations (\ref{1})-(\ref{2}) in dimensionless form.
	%	As per the usual approach for lubrication theory, we assume 
	The incompressibility condition $(\ref{1})_1$ remains the same in the new variables, leading to $(\ref{19})_1$. The Navier-Stokes equation $(\ref{1})_2$ in the horizontal component becomes,
	\begin{equation*}
		\frac{1}{\varepsilon}\;	\underset{\mathrm{Re}}{\underbrace{\varrho^L \frac{\sigma}{\mu^L} \frac{H}{\mu^L}}} \left( \partial_{\overline{t}} \overline{u} + \overline{u} \partial_{\overline{x}} \overline{u} + \overline{v} \partial_{\overline{y}} \overline{u}\right) =  \Big( \partial_{\overline{x}}^2\overline{u} + \frac{1}{\varepsilon^2} \partial_{\overline{y}}^2\overline{u}\Big) - \frac{1}{\varepsilon^2} \partial_{\overline{x}} \overline{p}.
	\end{equation*}
	Thus in the limit $\varepsilon \to 0$, both the time derivative and the non-linear term vanish compared to the pressure gradient as the Reynolds number satisfies $ \mathrm{Re} = \varepsilon^3\overline{\text{Re}}$ with $\overline{\text{Re}} = O(1)$ and one obtains $(\ref{19})_2$. Similarly the vertical component of Navier-Stokes equation reduces to $(\ref{19})_3$.
	
	Next we discuss the boundary conditions. At the liquid-gas interface $\overline{y} = \overline{h}(\overline{x}, \overline{t}), \overline{x} > \overline{\Lambda}(\overline{t})$, the unit normal and tangent vectors and the curvature have the following expressions,
	\begin{equation}
		\label{normal_tangent_vector}
		\vc{n}_1 = \frac{\left( \varepsilon \partial_{\overline{x}} \overline{h}, -1\right) }{\sqrt{(1+ \varepsilon^2 |\partial_{\overline{x}} \overline{h}|^2)}}, \quad \vc{\tau} = \frac{\left(1,  \varepsilon \partial_{\overline{x}} \overline{h}\right) }{\sqrt{(1+ \varepsilon^2 |\partial_{\overline{x}} \overline{h}|^2)}} \quad
		\text{ and } \quad \kappa = \frac{\varepsilon \partial^2_{\overline{x}} \overline{h}}{L\left( 1+ \varepsilon^2 |\partial_{\overline{x}} \overline{h}|^2\right) ^{3/2}}.
	\end{equation}
	The kinematic boundary condition $(\ref{6})_1$ reduces to the non-dimensional form $(\ref{41})_1$. Also the rate of strain tensor becomes, in the new variables,
	\begin{equation*}
		\renewcommand{\arraystretch}{1.5}
		\mathbb{D}\vc{v}^L = 
		\frac{1}{2} \begin{bmatrix}
			2\partial_{x} u & \partial_{y} u + \partial_{x} v\\
			\partial_{y} u + \partial_{x} v & 2 \partial_{y} v
		\end{bmatrix}
		=\frac{1}{2} \frac{ \sigma}{L \mu^L}
		\begin{bmatrix}
			2\partial_{\overline{x}} \overline{u} & \frac{1}{\varepsilon}\partial_{\overline{y}} \overline{u} + \varepsilon \partial_{\overline{x}} \overline{v}\\
			\frac{1}{\varepsilon}\partial_{\overline{y}} \overline{u} + \varepsilon\partial_{\overline{x}} \overline{v} & 2 \partial_{\overline{y}} \overline{v}
		\end{bmatrix}.
	\end{equation*}
	Therefore, the normal and the tangential stress on the surface $\overline{y}=\overline{h}(\overline{x}, \overline{t})$ are given by,
	\begin{equation}
		\label{normal_stress}	\renewcommand{\arraystretch}{1.5}
		\left[\mathbb{D}\vc{v}^L\cdot \vc{n}_1\right]\cdot \vc{n}_1 = \frac{\sigma}{L \mu^L}\frac{1}{\left( 1+ \varepsilon^2|\partial_{\overline{x}} \overline{h}|^2\right) } \left( - \partial_{\overline{y}} \overline{u} \;\partial_{\overline{x}} \overline{h} + \partial_{\overline{y}} \overline{v} + O(\varepsilon^2)\right),
	\end{equation}
	and
	\begin{equation}
		\label{tangential_stress}
		\renewcommand{\arraystretch}{1.5}
		\left[ \mathbb{D}\vc{v}^L\cdot \vc{n}_1\right]\cdot \vc{\tau} = \frac{\sigma}{L \mu^L}\frac{1}{2}\frac{1}{\left( 1+ \varepsilon^2|\partial_{\overline{x}} \overline{h}|^2\right) } \left(- \frac{1}{\varepsilon} \partial_{\overline{y}} \overline{u} + O(\varepsilon)\right) .
	\end{equation}
	\\
	Then, equation $(\ref{6})_2$ becomes, with the help of (\ref{normal_tangent_vector}) and (\ref{normal_stress}),
	\begin{equation*}
		%\begin{aligned}
		p^G - \frac{ \sigma}{\varepsilon^2 L} \overline{p} + 2 \frac{ \sigma}{L}\frac{1}{\left( 1+ \varepsilon^2|\partial_{\overline{x}} \overline{h}|^2\right) } \left( - \partial_{\overline{y}} \overline{u} \;\partial_{\overline{x}} \overline{h} + \partial_{\overline{y}} \overline{v} + O(\varepsilon^2)\right)- \frac{\sigma}{\varepsilon^3}  \frac{\varepsilon \;\partial^2_{\overline{x}} \overline{h}}{L\left( 1+ \varepsilon^2 |\partial_{\overline{x}} \overline{h}|^2\right) ^{3/2}} =0.
		%\end{aligned}
	\end{equation*}
	Due to the assumption that $\sigma$ is of order $1$, one obtains in the limit $(\ref{41})_2$. For equation $(\ref{6})_3$, one gets $(\ref{41})_3$ using the expression (\ref{tangential_stress}).
	
	At the bottom liquid-solid interface, with $\vc{n}_2 = (0,1), \tau = (1,0)$ at $y = 0$, the conditions (\ref{13}) read as,
	\begin{equation*}
		v=0, \qquad \mu^L (\partial_yu+ \partial_xv)= \beta u \qquad \text{ at } y=0,
	\end{equation*}
	which then convert into (\ref{bottomBC}).
	
	Similarly, at the upper liquid-solid interface, the normal and tangent vectors being,
	\begin{equation*}
		\vc{n}_2 = \frac{(\partial_x g,-1)}{\sqrt{1+ |\partial_xg|^2}}, \qquad \tau = \frac{(1,\partial_x g)}{\sqrt{1+|\partial_x g|^2}},
	\end{equation*}
	and $\vc{v}^S = (0,\partial_tg)$, the boundary conditions (\ref{13}) transform into the non-dimensional form (\ref{uppersolidBC}).
	
	The conditions at the contact point (\ref{2}) transform into (\ref{11}).\\
	
	At this point, we assume the particular case of no-slip boundary condition, i.e. $\frac{1}{\overline{\beta}} =0$
	for which the subsequent analysis will be done. Hence, the system (\ref{19})-(\ref{11.}) becomes, omitting the bar here onwards,
	\begin{subequations}
		\begin{align}
			&\partial_{x} u + \partial_{y} v =0, \qquad \partial^2_{y} u = \partial_{x} p, \qquad \partial_{y} p=0, \label{19.}\\
			& \hspace{5.38cm}\text{in } (x, y) \in ( 0, \Lambda ) \times ( 0, g) \cup ( \Lambda, \infty) \times ( 0, h), \nonumber\\
			& \nonumber\\
			&\partial_{t} h + u \;\partial_{x} h -v =0, \qquad p=- \partial^2_{x} h, \qquad
			\partial_{y} u=0, \hspace{0.4cm}\qquad \text{ on } x> \Lambda, \; y=h,	\label{41.}\\
			&v=0, \qquad u =0, \hspace{6.3cm}\qquad \text{ on } y=0, \label{bottomBC.}\\
			&\partial_{x} g\; u = v - \partial_{t} g, \qquad u =0, \hspace{2.8cm}\qquad \text{on } x < \Lambda, \; y= g, \label{uppersolidBC.}\\
			&h = g, \qquad
			\partial_{x} h=\partial_{x} g -k, \hspace{5.59cm}\qquad \text{ at } x = \Lambda, \label{11...}\\
			&h \to 1 \hspace{9cm}\qquad \text{as } x\to\infty, \label{11..}
		\end{align}
	\end{subequations}

	\noindent \textbf{Thin film equations}: The thin film model can now be derived from (\ref{19.})-(\ref{11..}), together with the boundary conditions and initial data as,
	\begin{subequations}
		\label{24}
		\begin{align}
			\partial_{t} h +\frac{1}{3} \partial_{x}\Big(  h^3 \partial^3_{x} h \Big)  =0, \qquad &\text{ in } x> \Lambda, \label{thin_film_eqn}\\
			h= g, \qquad
			\partial_{x} h = \partial_x g -k, \qquad &\text{ at } x = \Lambda, \label{contact_line} \\
			h \to 1, \qquad &\text{ as } \ x\to\infty,\\
			h =1, \qquad &\text{ at } \ t=0. \label{IC}
		\end{align}
	\end{subequations}
	The deduction of the above system is explained in the following. Integrating $(\ref{19.})_2$ twice gives the profile of the horizontal velocity,
	\begin{equation}
		\label{hori_vel}
		u(x,y) = \frac{1}{2}\partial_x p \;y^2+ A(x,t) y + B(x,t), \qquad x>0,
	\end{equation}
	where the constants $A, B$ are to be determined. The condition at the lower boundary $(\ref{bottomBC.})_2$ implies,
	\begin{equation}
		\label{5}
		B =0, \qquad x>0.
	\end{equation}
	
	\noindent \textbf{Exterior region}:
	Also the condition at the free boundary $(\ref{41.})_3$ yields,
	\begin{equation}
		\label{3}
		A = -\partial_x p \; h, \qquad x> \Lambda.
	\end{equation}
	Further, as the pressure is independent of $y$ due to $(\ref{19.})_3$, the horizontal velocity (\ref{hori_vel}) becomes, together with (\ref{3}) and $(\ref{41.})_2$,
	\begin{equation}
		\label{4}
		u = \frac{1}{2} \partial^3_x h\; (2h-y)y , \qquad x>\Lambda.
	\end{equation}
	Next $(\ref{19.})_1$ and $(\ref{bottomBC.})_1$ give,
	\begin{equation*}
		v|_{y=h} = -\int\displaylimits_0^h {\partial_x u \;\mathrm{d}y}, \qquad x> \Lambda.
	\end{equation*}
	Thus from $(\ref{41.})_1$, we obtain,
	\begin{equation*}
		\partial_{t} h+ \partial_{x} \Big( \int\displaylimits_0^{h}{u \;\mathrm{d}y}\Big) =0, \qquad x>\Lambda.
	\end{equation*}
	Substituting the velocity profile (\ref{4}) into the above relation finally gives the thin film equation (\ref{thin_film_eqn}). The contact line condition (\ref{contact_line}) is nothing but (\ref{11...}) and the initial data (\ref{IC}) follows from (\ref{initial_matching}).\\
	
	\noindent \textbf{Interior region}:	In order to determine the dynamics of the contact point fully, we need to prescribe sufficient conditions in the inner region $x <\Lambda$  as well. To do so, together with the condition (\ref{5}), the condition at the liquid-solid interface $(\ref{uppersolidBC.})_2$ for $x <\Lambda, y = g$, which reads as, due to (\ref{hori_vel}),
	\begin{equation*}
		\frac{1}{2} \partial_{x} p \; g^2 + A \; g =0,
	\end{equation*}
	gives a relation between $\partial_xp$ and $A$ for $x\in(0, \Lambda)$, that is
	\begin{equation}
		\label{42}
		\partial_{x} p=- \frac{2 A}{ g}.
	\end{equation}
	Also, the incompressibility condition $(\ref{19.})_1$ gives an expression for the normal velocity for $x\in \left( 0,\Lambda\right)$,
	\begin{equation*}
		%\begin{aligned}
		v\arrowvert_{y=  g} = - \int\displaylimits_0^{\tilde{h}}{\partial_{x} u \;\mathrm{d}y} =- g \left[ \frac{1}{6} \partial^2_{x} p \;g^2 + \frac{1}{2}\partial_{x} A \; g \right] .
		%\end{aligned}
	\end{equation*}
	Thus the condition $(\ref{uppersolidBC.})_1$ implies, for $x\in (0,\Lambda)$, together with (\ref{hori_vel}),
	\begin{equation*}
		\begin{aligned}
			& \partial_x g \left[ \frac{1}{2} \partial_{x} p \; g^2 + A g \right] + \partial_{t}g+g \left[ \frac{1}{6} \partial^2_{x} p \; g^2 + \frac{1}{2}\partial_{x} A \;g \right]=0.
		\end{aligned}
	\end{equation*}
	The above equation together with the relation (\ref{42}) yields an ODE for $A$ with rational coefficients,
	\begin{equation}
		\label{ODE}
		\partial_{x} A +2\frac{\partial_x g }{ g } \;A  + 6 \frac{\partial_{t} g} { g^2} =0, \qquad x< \Lambda.
	\end{equation}
	Also, due to the symmetry assumption (\ref{symmetry_cond}), we have the boundary condition
	\begin{equation*}
		A(x, t)=0 \qquad \text{ at } x=0.
	\end{equation*}
	Therefore, the ODE (\ref{ODE}) determines $A$ and in turn $\partial_xp$ in the inner region $x\in(0, \Lambda)$. Here we assume that the horizontal velocity $u$ is continuous, thus the relation (\ref{hori_vel}) holds at $x = 0$ as well. 
	
	Lastly, we would like to determine a relation between $A(\Lambda^-,t)$ and $\partial^3_x h(\Lambda^+,t)$ imposing continuity of the mass flux through $x=\Lambda$. Notice that the distribution of velocity changes at $x=\Lambda$. There seems to be a boundary layer structure at $x=\Lambda$ that seems interesting which we will not examine in detail in this paper.  We recall that the horizontal velocity (\ref{hori_vel}) can be expressed as, due the relations (\ref{5}) leading to $B=0$ for $\beta = \infty$ and (\ref{42}) giving $\partial_x p$,
	\begin{equation*}
		u = \frac{1}{2}\partial_x p \;y^2+ A y = A y\left( 1-\frac{y}{g}\right), \qquad 0< x<\Lambda.
	\end{equation*}		
	Therefore, the flux from $x<\Lambda$ is given by, since $h|_{x=\Lambda}=g|_{x=\Lambda}$,
	\begin{equation}
		\label{39}
		\int\displaylimits_0^{h(\Lambda,t)}{u \;\mathrm{d}y} = A \int\displaylimits_0^{h(\Lambda,t)} {y \left( 1-\frac{y}{g(\Lambda,t)}\right) \mathrm{d}y} = \frac{1}{6}A \; h^2.
	\end{equation} 
	Now we compute the flux from $x>\Lambda$. We have, from (\ref{4}), for $\beta =\infty$,
	\begin{equation}
		\label{40}
		\int\displaylimits_0^{h(\Lambda,t)}{u \;\mathrm{d}y}
		= \frac{1}{2} \partial_x^3 h \int\displaylimits_0^{h(\Lambda,t)} {(2h -y)y \;\mathrm{d}y}
		= \frac{1}{3} \partial_x^3 h \; h^3.
	\end{equation}
	Equating the two fluxes (\ref{39}) and (\ref{40}), %in order to have mass conservation,
	we obtain,
	\begin{equation}
		\label{extra_cond}
		A = 3 h \;\partial_{x}^3 h \qquad \text{ at } x = \Lambda.
	\end{equation}
	
	%Moreover, the continuity of $u$ at $\Lambda$ further imposes the condition
	%	\begin{equation}
		%		\label{extra_cond}
		%		A = h \;\partial_{x}^3 h \qquad \text{ at } x = \Lambda.
		%	\end{equation}
	This above relation (\ref{extra_cond}), together with the thin film equations (\ref{thin_film_eqn})-(\ref{IC}) determines fully the dynamics and the position of the contact point, under general slip boundary condition at the liquid-solid interface. This thin film system is described for a particular case of wedge-shaped solid, without its detailed derivation, in \cite[Section 3.3]{GNV}.
	
	\begin{remark}
		One can obtain more general system for any $\beta\in [0, \infty)$, by using (\ref{19})-(\ref{11.}) instead of (\ref{19.})-(\ref{11..}), as stated below.
		\begin{subequations}
			\begin{align*}
				\partial_{t} h + \partial_{x}\Big(  \Big( \frac{h}{3} + \frac{1}{\beta} \Big)h^2 \partial^3_{x} h \Big)  =0, \qquad &\text{ in } x> \Lambda,\\
				h= g, \qquad
				\partial_{x} h = \partial_x g -k, \qquad &\text{ at } x = \Lambda, \\
				h \to 1, \qquad &\text{ as } \ x\to\infty,\\
				h =1, \qquad &\text{ at } \ t=0,
			\end{align*}
		\end{subequations}
		together with a condition similar to (\ref{extra_cond}). Here $A$ satisfies the ODE
		\begin{equation*}
			\label{ODE.}
			\partial_{x} A +r_1(x, t) \;A  + r_2(x, t) =0, \quad 	A(0, t)=0, \qquad x< \Lambda,
		\end{equation*}
		where
		\begin{equation*}
			r_1(x, t) = \frac{\partial_x g \left(\frac{1}{\beta}+ \frac{1}{3} g\right)}{ g \left( \frac{1}{\beta} +\frac{1}{6} g\right) }, \quad r_2(x, t) = \frac{\partial_{t} g} { g \left( \frac{1}{ \beta} +\frac{1}{6} g\right)}.
		\end{equation*}
	\end{remark}

	\section{Local well-posedness for no-slip condition}
	\label{S3}
	
	\subsection{Notations and functional settings}
	\label{S3.1}
	
	Let $C_0[0, \infty)$ denote the Banach space of continuous functions on $[0, \infty)$ vanishing at infinity, together with the supremum norm on $[0, \infty)$. The space $C^k[0, \infty), k\in\mathbb{N}$ of functions with $k$ times continuous and bounded derivatives is endowed with the standard norm
	\begin{equation*}
		\|v\|_{C^k[0,\infty)} := \sum_{m=0}^{k}\|\partial^m v\|_{C[0,\infty)}.
	\end{equation*}
	We write $C[0, \infty) \equiv C^0[0, \infty)$ for the space of all continuous, bounded functions on $[0,\infty)$. 
	The Banach space of $\rho$-H\"{o}lder continuous functions $C^{k+\rho} [0, \infty)$ for $k \in \mathbb{N}_0 := \mathbb{N}\cup \{0\}, \rho\in(0,1)$, is defined by
	\begin{equation*}
		C^{k+\rho} [0, \infty):= \{v\in C^k[0,\infty): [\partial^kv]_{\rho}:= \sup_{\underset{x\neq y}{x,y\in [0,\infty)}} \frac{|\partial^kv(x)-\partial^kv(y)|}{|x-y|^\rho} < \infty\},
	\end{equation*}
	together with the norm
	%	\begin{equation*}
		%		\|v\|_{C^\rho [0, \infty)} := \|v\|_{C [0, \infty)} + [v]_{\rho},
		%	\end{equation*}
	%	and
	%	\begin{equation*}
		%		C^{k+\rho} [0, \infty):= \{v\in C^k[0,\infty): [\partial^kv]_{\rho} < \infty\},
		%	\end{equation*}
	%	with the norm
	\begin{equation*}
		\|v\|_{C^{k+\rho} [0, \infty)} := \|v\|_{C^k[0, \infty)} + [\partial^kv]_{\rho}.
	\end{equation*}
	For $\rho = 1$, the above notions coincide with the class of Lipschitz functions which we denote by $C^{1-} [0, \infty)$.
	Sometimes we omit the underlying space $[0, \infty)$ below which should not cause any confusion.

	\subsection{Reduction to a fixed domain}
	\label{S3.2}
	Recall that we are interested to study the specific situation of the no-slip boundary condition at the liquid-solid interface
	%, i.e. $\frac{1}{\beta} = 0$. %to simplify the above obtained thin film model further
	%	One should not get confused that the above slip coefficient $\beta$ is actually the rescaled one, denoted by $\overline{\beta}$ in the scaling introduced before, where we assume that $\overline{\beta} = O(1)$ with respect to the parameter $\varepsilon\ll 1$. Now we have the resulting lubrication system (\ref{24}) for $h$ where $\beta$ is a constant and can take any value in $[0, \infty]$. From here onwards, we are interested to study the particular case corresponding to $\beta = \infty$. It is also possible to deduce the thin film model (\ref{24}) corresponding to the no-slip condition ($\beta = \infty$) from the very beginning, but we keep the system as general as possible and discuss the specific situation as needed.}
and will show the local and global well-posedness for this problem. As mentioned in the introduction, such result is different from the standard analysis for the thin film equation for droplet with no-slip condition.
%in contrast with the classical thin film equation (for droplet) with no-slip condition.

Using the time scaling $t\to 3t, t_0 \to 3t_0$ in order to avoid carrying the fraction $1/3$ in the rest of the paper, the thin film model (\ref{thin_film_eqn})-(\ref{IC}) reduces to,
%\begin{subequations}
\begin{align*}
	\partial_{t} h + \partial_{x}( h^3 \partial^3_{x} h )  =0 \qquad &\text{ in } x> \Lambda, t>0,\\
	h=g, \qquad
	\partial_{x} h = \partial_xg -k, \qquad h \;\partial_{x}^3 h = \tfrac{1}{3}A \qquad &\text{ at } x = \Lambda, t>0,\\
	h \to 1 \qquad &\text{ as } \ x\to\infty, t>0,\\
	h =1 \qquad &\text{ at } \ t=0, x>\Lambda_0,
\end{align*}
%\end{subequations}
where $A$ solves the following ODE in $(0, \Lambda)$ (cf. (\ref{ODE})),
\begin{equation}
	\label{A}
	\partial_x A + 2\frac{\partial_x g}{g}A + 6\frac{\partial_t g}{g^2}=0, \qquad A(0,t)=0.
\end{equation}
Solving (\ref{A}), together with the fact that $\partial_{xt} g =0$ for all $x, t>0$ since the solid is having only vertically downward motion, gives
\begin{equation*}
	A(x,t) = -6x\frac{\partial_t g}{g^2}.
\end{equation*}
Indeed, with this choice of $A$, we have
\begin{equation*}
	\partial_x A = -6 \frac{\partial_t g}{g^2} -6x \frac{\partial_{xt} g}{g^2} + 12x \frac{\partial_t g \;\partial_x g}{g^3},
\end{equation*}
which, together with $\partial_{xt} g =0$, satisfies (\ref{A}).
Thus, we obtain the system (\ref{7}). Recall that we have $g>0$ for all $x,t\ge 0$ by assumption.

Without loss of generality, we assume $\Lambda(0) = 0$.
It is natural to look for classical solutions, in particular $\Lambda\in C^1$ that satisfy
%	Here we look for existence of classical solutions, in particular $\Lambda\in C^1$ with the condition that for some $\delta > 0$, there exists $T_\delta>0$ such that
\begin{equation}
	\label{18}
	|\Lambda(t)|\le \delta^2 \qquad \text{ and } \quad |\dot{\Lambda}(t)|\le C \qquad \text{ for } \ t\in [0, T],
\end{equation}
for some $\delta>0$ small and some $T>0$. We will prove in the following that these classical solutions exist. In order to do that, after transforming the moving domain problem (\ref{7}) to a fixed domain (cf. (\ref{10})-(\ref{12})) by some change of variables, we show in Theorem \ref{Th4} the existence of $\Lambda$ by means of a fixed point argument.

For such a $\delta$ fixed, let us now consider a cut-off function $\xi_\delta\in C_c^\infty(\mathbb{R})$ i.e.
\begin{equation*}
	\xi_\delta(s) =
	\begin{cases}
		1, & \quad \text{ if } \ 0\le s\le \delta,\\
		0, & \quad \text{ if } \ s\ge 2\delta,	
	\end{cases}
	\qquad \text{ such that } \quad |\xi'_\delta| \le \frac{C}{\delta},
\end{equation*}
and the following bijection
\begin{equation*}
	Q_\Lambda(x,t) = (x-\Lambda(t))\xi_\delta(x-\Lambda(t)) + x (1-\xi_\delta(x-\Lambda(t)))
\end{equation*}
which transforms the moving domain $(\Lambda(t), \infty)$ to $(0,\infty)$.
Denoting by
\begin{equation*}
	\overline{x} = Q_\Lambda(x,t) \qquad \text{ and } \qquad H(\overline{x}, t) = h(x,t),
\end{equation*}
one can compute the derivatives in the new coordinate as,
\begin{equation*}
	\partial_t h = \partial_t H + \dot{\Lambda}(\Lambda \xi'_\delta - \xi_\delta)\partial_{\overline{x}} H, \qquad \partial_x h = (1-\Lambda \xi'_\delta) \partial_{\overline{x}} H.
\end{equation*}
Therefore, the free boundary problem (\ref{7}) reduces to a fixed domain as
\begin{equation}
	\label{8}
	\begin{aligned}
		\partial_{t} H - \dot{\Lambda}(t) \xi_\delta \; \partial_{\overline{x}}H + (1-\Lambda\xi'_\delta)^4 \; \partial_{\overline{x}}( H^3 \partial^3_{\overline{x}} H ) + F_1= 0 \qquad &\text{ in } \overline{x}> 0,\\
		H= \psi_1, \quad \partial_{\overline{x}} H = \psi_2, \quad \partial_{\overline{x}}^3 H = \psi_3 \qquad &\text{ at } \overline{x} = 0,\\
		H \to 1 \qquad &\text{ as } \ \overline{x}\to\infty,\\
		H(\overline{x}, 0) = H_0\equiv h_0(\overline{x}+ \Lambda(0)) \to 1 \qquad &\text{ as } \ \overline{x}\to\infty,
	\end{aligned}
\end{equation}
where
\begin{equation*}
	F_1 \equiv F_1(\dot{\Lambda}, \xi'_\delta, H) \quad \text{ with \ supp} (F_1) \subset [\delta, 2\delta], 
\end{equation*}
and
\begin{equation}
	\label{23}
	\psi_1(\Lambda, t) \equiv g(\Lambda,t), \qquad \psi_2 (\Lambda,t) \equiv \partial_{x} g(\Lambda,t) -k, \qquad \psi_3(\Lambda,t) \equiv -2\Lambda\frac{\partial_t g}{ g^3}(\Lambda,t).
\end{equation}
While it is common to use change of variables with respect to Lagrangian coordinates reformulating a free boundary problem to a fixed domain, the concerned domain in this work being a simple one in one dimension, one can explicitly write down one such transformation (e.g. $Q_\Lambda$ as defined above) which solves the purpose.

Next we use the following transformation in order to lift the boundary conditions and the far field condition,
\begin{equation}
	\label{transformation}
	\begin{aligned}
		H(\overline{x}, t) &= [ \psi_2 (\Lambda,t)\;\overline{x} + \psi_3(\Lambda,t)\; \overline{x}^3]\xi_\delta(\overline{x}) + U(\overline{x}, t) + (1-\xi_\delta(\overline{x}))\\
		&=: a(\overline{x}, t, \Lambda)\xi_\delta(\overline{x}) + U(\overline{x}, t) + (1-\xi_\delta(\overline{x})),
	\end{aligned} 
\end{equation}
where $|U(\overline{x}, t)|\to 0$ as $\overline{x}\to 0$. Observe that  $a(\overline{x},t,\Lambda)>0$ for $\overline{x}>0,t\in (0,T)$ for suitably chosen $\delta$. Then the system (\ref{8}) becomes, omitting the bar over $x$, 
\begin{equation}
	\label{10}
	\begin{aligned}
		\partial_{t} U - \dot{\Lambda} (\psi_2\; \xi_\delta + \partial_xU)\xi_\delta + (1-\Lambda \xi'_\delta)^4\; \partial_{x}( (a\xi_\delta + 1-\xi_\delta + U)^3 \partial^3_{x} U ) = F_2 + F_3 \quad &\text{ in } x> 0,\\
		\partial_{x} U =0, \qquad \partial_{x}^3 U = 0 \quad &\text{ at } x = 0,\\
		U \to 0 \quad &\text{ as } \ x\to\infty,\\
		U(x, 0) =U_0\equiv h_0(x) - a(x,0) \quad&\text{ for } \ x>0,
	\end{aligned}
\end{equation}
together with
\begin{equation}
	\label{12}
	U = \psi_1(\Lambda,t) \qquad \text{ at } x=0, t>0,
\end{equation}
where
\begin{equation}
	\label{F_2}
	F_2 \equiv F_2(\dot{\Lambda}, \xi'_\delta, U) \quad \text{ with \ supp} (F_2) \subset [\delta, 2\delta], 
\end{equation}
and
\begin{equation}
	\label{F_3}
	F_3 = 6(1-\Lambda \xi'_\delta)^4 \;\partial_x ((a\xi_\delta + (1-\xi_\delta) +U)^3 \;\psi_3 \;x^3\;\xi_\delta)  - \partial_t a\;\xi_\delta + \dot{\Lambda}\; 3 x^2 \psi_3\;\xi^2_\delta.
\end{equation}
The above transformation is useful for splitting the full system as a Cauchy problem (\ref{10}) and a fixed point map (\ref{12}).
The general theory for quasilinear parabolic problems can be applied to treat the fourth order system (\ref{10}) which in turn give the existence of a solution for the full problem (\ref{8}).

\begin{remark}
	\label{Rem1}
	As can be seen from (\ref{12}), $\Lambda$ and $U$ must have the same regularity in time; On the other hand, it is not obvious or immediate to obtain the same time regularity from the parabolic system (\ref{10}) if one uses standard Sobolev spaces. Hence we chose to use H\"{o}lder spaces for the solution since it is important not to lose (trace) regularity in order to perform the fixed point argument in (\ref{12}).
\end{remark}

\subsection{Proof of the main result}
\label{S3.3}
We follow the standard approach of finding a solution of (\ref{10}), with suitable estimates, for a prescribed motion $\Lambda$ and then perform a fixed point argument for (\ref{12}).

The idea is to formulate the solution of (\ref{10}) in terms of a convolution with the Green function for the linear problem. The existence result for the linear problem is stated as below.

\begin{lemma}
	\label{Lem4}
	Let $\Lambda\in C^{1+\frac{\alpha}{4}}[0,T]$, $U_0\in C^{4+\alpha}[0,\infty)\cap C_0[0,\infty)$ where $\alpha \in (0,1)$ and $g$ be smooth enough. There exists a fundamental solution $G(x,t,y,\tau)$ of the linear boundary value problem
	\begin{equation}
		\label{9}
		\begin{aligned}
			\partial_{t} U^*+ (1-\Lambda \xi'_\delta)^4\; \partial_{x}( (a\xi_\delta + (1-\xi_\delta))^3 \partial^3_{x} U^* ) + \dot{\Lambda}\; \partial_xU^* \;\xi_\delta =0 \quad &\text{ in } \ x> 0,\\
			\partial_{x} U^* =0, \qquad \partial_{x}^3 U^* = 0 \quad &\text{ at } \ x = 0,\\
			U^* \to 0 \quad &\text{ as } \ x\to\infty,\\
			U^*(x, 0) =U_0 \quad&\text{ for } \ x>0,
		\end{aligned}
	\end{equation}
	satisfying the estimate
	\begin{equation}
		\label{Green_estimate}
		|\partial^m G(x,t, y,\tau)|\le c_m (t-\tau)^{-\frac{m+1}{4}} \exp\left\lbrace -c \left( \tfrac{|x-y|}{(t-\tau)^{1/4}}\right) ^{4/3} \right\rbrace , \quad t\in [0,T] ,
	\end{equation}
	where the above partial derivative $\partial^m G$ is taken with respect to $x$.
\end{lemma}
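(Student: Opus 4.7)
The plan is to construct $G$ by Levi's parametrix method, which is the standard route to fundamental solutions of higher-order parabolic equations with H\"older coefficients (following Eidelman, Solonnikov, Friedman). First I would rewrite the principal part of the operator in (\ref{9}) as $\partial_t U^* + \partial_x(b(x,t)\,\partial_x^3 U^*) + c(x,t)\,\partial_x U^*$, where
\[
b(x,t) := (1-\Lambda\xi'_\delta)^4\,(a\xi_\delta + (1-\xi_\delta))^3, \qquad c(x,t) := \dot\Lambda\,\xi_\delta.
\]
The hypotheses $\Lambda\in C^{1+\alpha/4}[0,T]$ and $g$ smooth, together with the explicit form of $a$ in (\ref{transformation}) and the cut-off $\xi_\delta$, give that $b,c$ lie in the parabolic H\"older class $C^{\alpha/4,\alpha}$. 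Choosing $\delta$ small and using (\ref{18}) together with $g>0$ and $\psi_2$ of order $k$, I verify the strict positivity $b(x,t)\ge b_0>0$ uniformly on $[0,\infty)\times[0,T]$ and the boundedness of $c$. This is the parabolicity condition for the fourth-order linear problem.

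Next I would freeze coefficients at each reference point $(y,\tau)$ and build a parametrix $Z(x,t,y,\tau)$ from the half-line fundamental solution of $\partial_t V + b(y,\tau)\,\partial_x^4 V = 0$ under the boundary conditions $\partial_x V=\partial_x^3 V=0$ at $x=0$. Because these two conditions are exactly the compatibility conditions for an even extension, the half-line frozen Green function is obtained by reflecting the whole-line biharmonic heat kernel, whose explicit Fourier representation gives the sharp bound
\[
|\partial_x^m K_{b_0}(x,t)|\le c_m\, t^{-(m+1)/4}\exp\bigl\{-c\,(|x|/t^{1/4})^{4/3}\bigr\},
\]
with the characteristic $4/3$-exponent appearing from a stationary phase/Paley--Wiener analysis of $\exp(-b_0\xi^4 t)$. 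The parametrix $Z$ inherits this decay.

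The true Green function is then sought in the form $G=Z+\int_\tau^t\!\!\int_0^\infty Z(x,t,z,s)\,\Phi(z,s,y,\tau)\,dz\,ds$, where $\Phi$ solves the Volterra integral equation $\Phi=L_* Z + \int L_* Z\cdot\Phi$; here $L_*$ is the difference between the full operator (with frozen-at-$(y,\tau)$ coefficients) and the true variable operator, together with the lower-order term $c\,\partial_x$. The H\"older regularity of $b$ ensures $|L_*Z|\le C(t-\tau)^{-1+\alpha/4}$ times the Gaussian factor, so the Volterra series converges in a weighted sup-norm, and the bound (\ref{Green_estimate}) propagates through each iteration because convolution of two kernels of the stated form produces another kernel of the same form (with possibly larger constants). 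The boundary conditions for $G$ are preserved because they already hold for the parametrix $Z$ at every $(y,\tau)$ and they are linear, so they pass to the convergent series.

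The main obstacle I expect is bookkeeping the anisotropic $4/3$-exponential factor through the Volterra iteration, particularly the short-time singularity near $t=\tau$; this is the classical delicate estimate in Eidelman-type theory, and the first-order term $c\,\partial_x U^*$ must be absorbed as a subordinate perturbation (its contribution to $L_*Z$ behaves like $(t-\tau)^{-1/2}$ which is integrable in $s$). A secondary difficulty is that the boundary conditions must hold pointwise in $t$, so one must verify that the series and the term-by-term differentiation commute up to $x=0$; this follows from the rapid Gaussian decay of $Z$ and its derivatives away from the diagonal $x=y$.
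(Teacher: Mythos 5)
Your parametrix construction is, in substance, a from-scratch reproof of the result the paper simply cites: the paper's proof of Lemma \ref{Lem4} consists of reducing to zero initial data (subtracting $U_0$, which is admissible since $U_0\in C^{4+\alpha}$ and satisfies the compatibility conditions), checking that the coefficients $a_4=(1-\Lambda\xi'_\delta)^4(a\xi_\delta+(1-\xi_\delta))^3$, $a_3$, $a_1=\dot\Lambda\xi_\delta$ are H\"older continuous, verifying that the boundary operators $(\partial_x,\partial_x^3)$ fit the framework, and invoking \cite[Ch.~IV.2, Thm.~3.4]{Eidelman69}, whose own proof is exactly the Levi parametrix/Volterra scheme you outline (frozen-coefficient half-line kernel, here obtainable by even reflection since $\partial_xV=\partial_x^3V=0$ is compatible with it, plus iteration of the freezing error). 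So the route is the same mathematics at a different level of granularity; what your version buys is an explicit account of where the $4/3$-exponential in (\ref{Green_estimate}) comes from, at the cost of redoing the classical singular-kernel bookkeeping. On that bookkeeping, note one slip: the freezing error satisfies $|L_*Z|\le C\,(t-\tau)^{-(5-\alpha)/4}$ times the exponential factor (from $|b(x,t)-b(y,\tau)|\,|\partial_x^4Z|$), not $(t-\tau)^{-1+\alpha/4}$; this is not integrable in time by itself, and the Volterra series closes only after exploiting the gain from the spatial convolution of the two exponential factors, which is the standard Eidelman/Friedman computation you already flag as the delicate step.

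There is, however, one claimed verification that does not hold as stated: the uniform parabolicity $b(x,t)\ge b_0>0$ on $[0,\infty)\times[0,T]$. By (\ref{transformation}), $a(\overline{x},t,\Lambda)=\psi_2\,\overline{x}+\psi_3\,\overline{x}^3$ vanishes at $\overline{x}=0$, while $\xi_\delta(0)=1$, so the leading coefficient of the linear problem (\ref{9}) satisfies $b(0,t)=0$; no choice of small $\delta$ repairs this, since the nondegenerate quantity is the quasilinear coefficient $(a\xi_\delta+1-\xi_\delta+U)^3$ of (\ref{10}), which equals $\psi_1^3=g^3>0$ at $x=0$, not the coefficient of (\ref{9}) with $U$ dropped. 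Without a positive lower bound the frozen-coefficient kernel at base points near $y=0$ is not parabolic and the uniform bound (\ref{Green_estimate}) cannot be expected, so either the linear problem must be understood with the lifted boundary value included in the coefficient or uniform parabolicity must be imposed as a hypothesis. To be fair, the paper's citation-based proof checks only H\"older continuity and is silent on this point as well, so the gap is shared; but since your argument explicitly relies on $b\ge b_0>0$, you should either state it as an assumption or explain how the degeneracy at the boundary point is to be removed before the parametrix step.
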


\begin{proof}
	The existence of such $G$ follows from \cite[Chapter IV.2, Theorem 3.4]{Eidelman69}.
	As per the notations in \cite{Eidelman69}, for our system (\ref{36}), $N=1=n, b=2, r_1 =1, r_2 =3$ and the boundary conditions read as $B\equiv (\partial_x, \partial_{x}^3)u|_{x=0}=0$.
	The Cauchy problem (\ref{9}) can be reduced to a problem with zero initial condition by considering a function $(U^* - U_0)$ since $U_0\in C^{4+\alpha}[0,\infty)$.
	In $(\ref{9})_1$, the leading order coefficient  $a_4(x,t) =(1-\Lambda \xi'_\delta)^4 \;(a\xi_\delta + (1-\xi_\delta))^3$ is H\"older continuous in both $t\in (0,T)$ and $x\in (0,\infty)$, due to the expressions (\ref{23}), (\ref{transformation}) and the assumptions on $\Lambda, g$. Furthermore, the other coefficients $a_3(x,t) = (1-\Lambda \xi'_\delta)^4 \;\partial_x(a\xi_\delta + (1-\xi_\delta))^3$ and $a_1(x,t) = \dot{\Lambda}\xi_\delta$ are H\"older continuous in $x$ as well. Therefore, all the conditions of \cite[Chapter IV.2, Theorem 3.4]{Eidelman69} are satisfied and hence the existence and estimates of a fundamental solution.
	\hfill
\end{proof}

Next one obtains the following existence result for (\ref{10}), for a given fixed $\Lambda$. 
\begin{theorem}
	Let $U_0\in C^{4+\alpha}[0,\infty)\cap C_0[0,\infty)$ where $\alpha \in (0,1)$, satisfying compatibility conditions
	\begin{equation*}
		\partial_x U_0 = \partial_x^3 U_0 =0 \quad \text{ at } \quad x=0.
	\end{equation*}
	Then for any $\delta>0$ and $\Lambda\in C^{1+\frac{\alpha}{4}}[0, T]$ where $T$ is as in (\ref{18}), there exists $T_0 \in (0,T)$ such that (\ref{10}) has a unique solution $U(x,t)$ belonging to $C^{1+\frac{\alpha}{4}}((0,T_0), C^{4+\alpha}[0,\infty))$.
	
	The time interval $T_0$ depends on the upper bounds of the H\"older constants of the coefficients of (\ref{10}).
\end{theorem}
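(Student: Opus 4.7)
The strategy is a Banach contraction argument in a closed ball of the parabolic Hölder space $X_{T_0}:=C^{1+\alpha/4}((0,T_0); C^{4+\alpha}[0,\infty))$ centered at the constant-in-time extension $U_0$. Given $\bar U\in X_{T_0}$ in the ball, I freeze the quasilinear coefficient and let $U=\mathcal{T}(\bar U)$ be the solution of
\begin{equation*}
\partial_t U + (1-\Lambda\xi'_\delta)^4 \partial_x\bigl((a\xi_\delta+1-\xi_\delta+\bar U)^3\,\partial_x^3 U\bigr) = \dot\Lambda(\psi_2 \xi_\delta+\partial_x \bar U)\xi_\delta + F_2(\dot\Lambda,\xi'_\delta,\bar U)+F_3,
\end{equation*}
with the same boundary and initial data as (\ref{10}). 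Since $\bar U\in C^{\alpha/4,\alpha}$ in $(t,x)$, the frozen leading coefficient $(1-\Lambda\xi'_\delta)^4(a\xi_\delta+1-\xi_\delta+\bar U)^3$ is Hölder continuous in $(t,x)$, so Lemma \ref{Lem4} provides a Green function $G_{\bar U}$ with the estimate (\ref{Green_estimate}) uniformly for $\bar U$ in the ball.

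The solution $U$ is then represented via Duhamel:
\begin{equation*}
U(x,t)=\int_0^\infty G_{\bar U}(x,t,y,0)\,U_0(y)\,dy + \int_0^t\!\!\int_0^\infty G_{\bar U}(x,t,y,\tau)\,R_{\bar U}(y,\tau)\,dy\,d\tau,
\end{equation*}
where $R_{\bar U}$ collects the right-hand side (note that $F_2,F_3$ are supported in $[\delta,2\delta]$, hence smooth away from the boundary). The compatibility conditions $\partial_x U_0=\partial_x^3 U_0=0$ at $x=0$, together with the analogous conditions encoded in the reflection structure implicit in Eidelman's construction, ensure the Duhamel representation respects the homogeneous boundary conditions $\partial_x U=\partial_x^3 U=0$ at $x=0$. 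Integrating the estimate (\ref{Green_estimate}) in the standard parabolic-Schauder fashion yields
\begin{equation*}
\|U\|_{X_{T_0}} \le C\bigl(\|U_0\|_{C^{4+\alpha}} + \|R_{\bar U}\|_{C^{\alpha/4,\alpha}}\bigr),
\end{equation*}
and since $R_{\bar U}$ depends smoothly on the data with $\dot\Lambda\in C^{\alpha/4}$ bounded, the right-hand side stays inside the prescribed ball once $T_0$ is chosen small enough.

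For contraction, given $\bar U_1,\bar U_2$ in the ball, the difference $W:=\mathcal{T}(\bar U_1)-\mathcal{T}(\bar U_2)$ satisfies the linear equation with Green function $G_{\bar U_1}$ and right-hand side containing the terms
\begin{equation*}
\bigl[(a\xi_\delta+1-\xi_\delta+\bar U_1)^3-(a\xi_\delta+1-\xi_\delta+\bar U_2)^3\bigr]\partial_x^3\mathcal{T}(\bar U_2)
\end{equation*}
plus lower-order differences. The cubic factor is Lipschitz in $\bar U$, and the whole right-hand side has $C^{\alpha/4,\alpha}$-norm bounded by $C T_0^{\alpha/4}\|\bar U_1-\bar U_2\|_{X_{T_0}}$, the extra factor $T_0^{\alpha/4}$ coming from the fact that $\bar U_1-\bar U_2$ vanishes at $t=0$ (both iterates start from $U_0$). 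Running the Duhamel estimate one more time contracts in $X_{T_0}$, and Banach's fixed point theorem gives the unique solution.

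The main obstacle, as highlighted in Remark \ref{Rem1}, is securing the full $C^{1+\alpha/4}$ time regularity rather than the weaker $C^{(4+\alpha)/4}$ a priori furnished by the parabolic scaling. I would handle this by bootstrapping directly from the equation: once a solution in $C^{(4+\alpha)/4,4+\alpha}$ is obtained, the identity
\begin{equation*}
\partial_t U = -(1-\Lambda\xi'_\delta)^4\,\partial_x\bigl((a\xi_\delta+1-\xi_\delta+U)^3\,\partial_x^3 U\bigr) + \text{l.o.t.}
\end{equation*}
expresses $\partial_t U$ in terms of $\partial_x^4 U$ and lower derivatives, whose Hölder-in-time regularity is $C^{\alpha/4}$ by the Schauder theory applied to $G_{\bar U}$. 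This upgrades $U$ to $C^{1+\alpha/4}$ in time as required. The reason this step must be carried out carefully is that the outer fixed point in (\ref{12}) identifies $\Lambda(t)$ with a trace of $U$ at $x=0$, so time regularity of $U$ cannot be lost, motivating the Hölder (rather than Sobolev) framework throughout.
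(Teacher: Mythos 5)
Your proposal follows essentially the same route as the paper: the paper likewise represents the solution of (\ref{10}) via the Green function of the linear problem (\ref{9}) from Lemma \ref{Lem4}, moves the cubic nonlinearity (their $F_4$) to the right-hand side of the resulting integro-differential equation, and invokes the standard solvability theory of \cite[Chapter III.4, Theorem 8.3]{Eidelman69}. The only notable difference is cosmetic: you freeze the leading coefficient at the current iterate $\bar U$, so the Green function changes along the iteration, whereas the paper fixes the Green function once, built from the coefficient $(a\xi_\delta+1-\xi_\delta)^3$, and treats the full nonlinearity $F_4$ as a source term in the fixed-point map.
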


\begin{proof}
	The solution of (\ref{10}) can be expressed by the following integro-differential equation
	\begin{equation}
		%	\begin{aligned}
			U(x,t) = \int\displaylimits_0^t \int\displaylimits_0^\infty G(x,t,y,\tau) \left[ \dot{\Lambda} \psi_2\; \xi^2_\delta + F_2 + F_3 + F_4 \right](y,\tau)\;\mathrm{d}y \;\mathrm{d}\tau + \int\displaylimits_0^\infty G(x,t,y,0) U_0(y)\;\mathrm{d}y, \label{14}
			%	\end{aligned}
	\end{equation}
	where $G$ is the fundamental solution of the linear problem (\ref{9}), given in Lemma \ref{Lem4}, $F_2, F_3$ are defined in (\ref{F_2}), (\ref{F_3}) and
	\begin{equation}
		\label{F_4}
		F_4 := - (1-\Lambda \xi'_\delta)^4\; \partial_x(\{U^3 +3 U (a\xi_\delta + 1-\xi_\delta)(U + (a\xi_\delta + 1-\xi_\delta))\}\;\partial_x^3U).
	\end{equation}
	Solving the above equation is standard, for example one may refer to \cite[Chapter III.4, Theorem 8.3]{Eidelman69}. %The fact that $U$ has the regularity as stated in the theorem, is proved with the help of the properties of the fundamental solution $G$.
	\hfill
\end{proof}

\bigskip

The last step is to perform a fixed point argument obtaining a (local in time) solution for the full system (\ref{10})-(\ref{12}).

\begin{theorem}
	\label{Th4}
	Let $U_0\in C^{4+\alpha}[0,\infty)\cap C_0[0,\infty)$ where $\alpha \in (0,1)$, satisfying compatibility conditions
	\begin{equation*}
		U_0= \psi_1 (0,0), \quad \partial_x U_0 = \partial_x^3 U_0 =0 \quad \text{ at } \ x=0,
	\end{equation*}
	and $g$ is sufficiently regular. Then for small $\delta>0$ and small $T$ where $T$ is as in (\ref{18}), there exists $T_0 \in (0,T)$ such that (\ref{10})-(\ref{12}) has a unique solution $(U,\Lambda)\in C^{1+\frac{\alpha}{4}}((0,T_0), C^{4+\alpha}[0,\infty)) \times C^{1+\frac{\alpha}{4}}[0, T_0]$.
	
	The time interval $T_0$ depends on the upper bounds of the H\"older constants of the coefficients of (\ref{10}).
\end{theorem}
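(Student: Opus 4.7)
The plan is to recast (\ref{10})-(\ref{12}) as a single fixed-point equation for $\Lambda$. Given $\Lambda$ in the closed ball
\begin{equation*}
\mathcal{B}_{R,T_0} := \{\Lambda\in C^{1+\alpha/4}[0,T_0] : \Lambda(0)=0,\ \|\Lambda\|_{C^{1+\alpha/4}[0,T_0]}\le R\},
\end{equation*}
the preceding theorem supplies a unique $U\in C^{1+\alpha/4}((0,T_0), C^{4+\alpha}[0,\infty))$ solving the Cauchy problem (\ref{10}). I would then define a map $\mathcal{T}\colon\Lambda\mapsto\tilde\Lambda$ implicitly through
\begin{equation*}
g(\tilde\Lambda(t),t) = U(0,t),
\end{equation*}
which is well defined and smooth locally in $t$ thanks to $\partial_x g(0,0)\neq 0$ together with the compatibility condition $U_0(0)=\psi_1(0,0)=g(0,0)$, which forces $\tilde\Lambda(0)=0$. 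A fixed point of $\mathcal{T}$ is exactly a solution of (\ref{10})-(\ref{12}).

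First I would verify that $\mathcal{T}$ sends $\mathcal{B}_{R,T_0}$ into itself once $R$ is fixed large and $T_0$ small. Using the integral representation (\ref{14}) and the Green-function estimate (\ref{Green_estimate}), together with Hölder bounds for each forcing term $\dot\Lambda\psi_2\xi_\delta^2$, $F_2$, $F_3$, $F_4$ (each of which carries either a factor $\dot\Lambda$, a factor $\Lambda$ coming from $\psi_3$, or cut-off support in $[\delta,2\delta]$ away from the boundary), one would reach an estimate of the form
\begin{equation*}
\|U(0,\cdot)-g(0,0)\|_{C^{1+\alpha/4}[0,T_0]} \le C\,T_0^{\alpha/4}(1+R),
\end{equation*}
and composing with the smooth local inverse of $g(\cdot,t)$ would give the same control on $\tilde\Lambda$, so $\mathcal{T}(\mathcal{B}_{R,T_0})\subset\mathcal{B}_{R,T_0}$ for $T_0$ small.

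Next I would verify the contraction property after possibly shrinking $T_0$ further. For $\Lambda_1,\Lambda_2\in\mathcal{B}_{R,T_0}$ with respective solutions $U_1,U_2$, the difference $W=U_1-U_2$ solves a linear fourth-order parabolic equation whose coefficients differ from those of (\ref{9}) by terms of order $\|\Lambda_1-\Lambda_2\|_{C^{1+\alpha/4}}$, and whose source is linear in $\Lambda_1-\Lambda_2$, $\dot\Lambda_1-\dot\Lambda_2$ up to quasilinear remainders Lipschitz in $W$ with small constant for small $T_0$. Reapplying the Green representation then yields
\begin{equation*}
\|W(0,\cdot)\|_{C^{1+\alpha/4}[0,T_0]} \le C\,T_0^{\alpha/4}\,\|\Lambda_1-\Lambda_2\|_{C^{1+\alpha/4}[0,T_0]},
\end{equation*}
and this estimate passes through the implicit function theorem to $\mathcal{T}\Lambda_1-\mathcal{T}\Lambda_2$. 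Banach's fixed point theorem then produces the unique $(U,\Lambda)$ in the required regularity class.

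The main obstacle, flagged in Remark \ref{Rem1}, is to obtain Hölder control of the trace $U(0,t)$ in time with a constant carrying a favourable power of $T_0$, while only two boundary conditions are prescribed at $x=0$ for the fourth-order equation (\ref{10}); the value $U(0,t)$ is an output of the Cauchy problem rather than given data. This requires careful Hölder estimates on the kernel $G$ near $x=0$ and on the quasilinear forcing $F_4$, which mixes $U$, $\partial_x^3 U$, and the $\Lambda$-dependent weight $a\xi_\delta+1-\xi_\delta$. The Hölder framework is essential here: in a Sobolev setting the trace operator would cost a fractional derivative and prevent closure of the fixed-point argument.
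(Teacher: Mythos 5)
Your skeleton --- solve the Cauchy problem (\ref{10}) for a prescribed $\Lambda$ via the Green-function representation (\ref{14}) of Lemma \ref{Lem4}, then close a fixed point for $\Lambda$ from the trace condition (\ref{12}) in the H\"older framework of Remark \ref{Rem1} --- is the same as the paper's. The gap is in how you convert (\ref{12}) into a contraction. You define $\mathcal{T}$ by inverting $g(\cdot,t)$, which requires $\partial_x g(0,0)\neq 0$ (an assumption not made in the theorem), and you claim $\|U_1(0,\cdot)-U_2(0,\cdot)\|_{C^{1+\alpha/4}[0,T_0]}\le C\,T_0^{\alpha/4}\|\Lambda_1-\Lambda_2\|_{C^{1+\alpha/4}[0,T_0]}$. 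That estimate cannot hold: differentiating the Duhamel term in time, the boundary forcing $\dot\Lambda\,\psi_2\,\xi_\delta^2$ contributes $\dot\Lambda(t)\,\psi_2(\Lambda(t),t)$ to $\partial_t U(0,t)$ at order one, because the kernel carries unit mass, $\int_0^\infty G(x,t,y,\tau)\,\mathrm{d}y=1$; no factor $T_0^{\alpha/4}$ is gained in the $C^{1+\alpha/4}$ norm (smallness of $T_0$ helps the sup norm of $\int_0^t\dot\Lambda\,\psi_2$, not of its time derivative). Heuristically your map has the non-small linear part $\tilde\Lambda\approx\bigl(1-k/\partial_x g\bigr)\Lambda$, which is a contraction only when $0<k<2\,\partial_x g$ and degenerates entirely when $\partial_x g$ is small compared with $k$ (nearly flat solid), so Banach's theorem does not apply as you have set it up.

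The paper closes exactly this point by \emph{not} treating the term $\int_0^t\int_0^\infty G(0,t,y,\tau)\,\dot\Lambda\,\psi_2\,\xi_\delta^2$ as a perturbation: using the unit mass of $G$ and the exact integration with $\Psi(\Lambda,t)=\int_0^\Lambda\psi_2(y,t)\,\mathrm{d}y$, this term equals $g(\Lambda(t),t)-g(0,0)-k\,\Lambda(t)$ plus a correction $I_2$ supported where $1-\xi_\delta^2\neq0$, hence exponentially small by (\ref{Green_estimate}). The nonlinear quantity $g(\Lambda(t),t)$ then cancels identically against $\psi_1(\Lambda,t)=g(\Lambda,t)$ on the left of (\ref{15}), leaving the explicit relation $\Lambda(t)=\tfrac1k\left[-g(0,0)+I_2+II+III+IV+V\right]$, in which every term on the right is genuinely small in $C^{1+\alpha/4}$ for small $\delta$ and $T$ (cut-off supports in $[\delta,2\delta]$, factors $x^2\psi_3\xi_\delta^2$ or $x^3\xi_\delta$, and $\partial_x^3U|_{x=0}=0$), and Lipschitz in $\Lambda$. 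The invertibility used is $k>0$ (the contact angle), not $\partial_x g\neq0$, and the contraction constant is made small through $\delta$ and $T$. To repair your argument you would need to reproduce this cancellation (or an equivalent device extracting the $-k\Lambda$ term exactly); the remainder of your outline is consistent with the paper's proof.
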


\begin{proof}
	As $U$ is given by equation (\ref{14}), in order to satisfy the boundary condition (\ref{12}), one must have
	\begin{equation}
		\label{15}
		\begin{aligned}
			\psi_1(\Lambda,t) &= \int\displaylimits_0^t \int\displaylimits_0^\infty G(0,t,y,\tau) \left[ \dot{\Lambda} \psi_2\; \xi^2_\delta + F_2 + F_3 + F_4 \right](y,\tau)\;\mathrm{d}y \;\mathrm{d}\tau  + \int\displaylimits_0^\infty G(0,t,y,0) U_0(y)\;\mathrm{d}y\\
			&=: I+II+III+IV + V.
		\end{aligned}
	\end{equation}
	% we need to estimate each integral above. 
	The first integral being the most important term to be controlled (all the other terms being small), we try to rewrite and simplify it. Observe that the mass is conserved for the Green function satisfying (\ref{9}). Indeed, by denoting $g(x,t) := \int\displaylimits_0^\infty {G(x,t,y,\tau)\mathrm{d}y}$ and integrating the equations in (\ref{9}) with respect to $y$, we get that $g$ satisfies the same system. Hence by uniqueness,
	\begin{equation*}
		\int\displaylimits_0^\infty {G(x,t,y,\tau)\;\mathrm{d}y} =1, \qquad t\in [0,T].
	\end{equation*}
	Therefore,
	%\begin{equation}
	\begin{align}
		I & := \int\displaylimits_0^t \int\displaylimits_0^\infty {G(0,t,y,\tau) (\dot{\Lambda} \psi_2\; \xi^2_\delta) (y,\tau) \;\mathrm{d}y \;\mathrm{d}\tau } \nonumber\\
		& =
		\int\displaylimits_0^t {(\dot{\Lambda} \psi_2) (\tau)} \int\displaylimits_0^\infty {G(0,t,y,\tau) \;\mathrm{d}y \;\mathrm{d}\tau } - \int\displaylimits_0^t \int\displaylimits_0^\infty {G(0,t,y,\tau) (\dot{\Lambda} \psi_2)(\tau) (1-\xi^2_\delta) (y) \;\mathrm{d}y \;\mathrm{d}\tau } \nonumber\\
		& = \int\displaylimits_0^t {\;\dot{\Lambda}(\tau)\; \psi_2 (\Lambda(\tau),\tau) \; \mathrm{d}\tau} - \int\displaylimits_0^t \int\displaylimits_0^\infty {G(0,t,y,\tau) (\dot{\Lambda} \psi_2)(\tau) (1-\xi^2_\delta) (y) \;\mathrm{d}y \;\mathrm{d}\tau } \nonumber\\
		& =: I_1 + I_2. \label{38}
	\end{align}
	%\end{equation}
	At this point, let us denote $\Psi(\Lambda,t) = \int\displaylimits_0^\Lambda {\psi_2(y,t)\;\mathrm{d}y}$, which gives,
	\begin{equation*}
		\frac{\mathrm{d}}{\mathrm{d}t} \Psi = \int\displaylimits_0^\Lambda {\partial_t\psi_2(y,t)\;\mathrm{d}y} + \psi_2(\Lambda(t),t)\;\dot{\Lambda}(t).
	\end{equation*}
	%or,
	%\begin{equation*}
	%\psi_2(\Lambda(t),t)\dot{\Lambda}(t) =\frac{\mathrm{d}}{\mathrm{d}t} \Psi - \int\displaylimits_0^\Lambda {\partial_t\partial_\xi g(\xi,t) \;\mathrm{d}\xi}= \frac{\mathrm{d}}{\mathrm{d}t} \Psi - (\partial_t g(\Lambda,t) - \partial_tg (0,t)).
	%\end{equation*}
	%On the other hand,
	%\begin{equation*}
	%\Psi(\Lambda,t) = \int\displaylimits_0^\Lambda {(\partial_\xi g(\xi,t)-k)\;\mathrm{d}\xi} = (g(\Lambda,t) - g(0,t))-k\Lambda.
	%\end{equation*}
	Plugging this into (\ref{38}), we get
	\begin{equation*}
		\begin{aligned}
			I_1 & = \int\displaylimits_0^t {\bigg[\frac{\mathrm{d}}{ \mathrm{d}\tau} \Psi(\Lambda(\tau),\tau) -\int\displaylimits_0^{\Lambda(\tau)} {\partial_\tau\psi_2(y,\tau)\;\mathrm{d}y} \bigg] \mathrm{d}\tau}\\
			& = \left[ \Psi(\Lambda(t),t) - \Psi(0,0)\right] - \int\displaylimits_0^t {\int\displaylimits_0^{\Lambda(\tau)} {\partial_\tau\psi_2(y,\tau)\;\mathrm{d}y} \; \mathrm{d}\tau}.
		\end{aligned}
	\end{equation*}
	Using the definition of $\Psi$ and $\psi_2$, one further obtains, after some integration by parts with respect to $y$,
	%\begin{equation*}
	\begin{align}
		I_1 & = \int\displaylimits_0^\Lambda {(\partial_y g(y,t)-k)\;\mathrm{d}y}-\int\displaylimits_0^t { (\partial_\tau g(\Lambda(\tau),\tau) - \partial_\tau g (0,\tau)) \; \mathrm{d}\tau} \nonumber\\
		& = \left[ (g(\Lambda,t) - g(0,t))-k\Lambda\right] -\int\displaylimits_0^t { \partial_\tau g(\Lambda(\tau),\tau)\; \mathrm{d}\tau} +\left[g(0,t)-g(0,0)\right] \nonumber\\
		& = g(\Lambda,t) - g(0,0)-k\;\Lambda(t). \label{16}
	\end{align}
	%\end{equation*}
	Therefore, combining the relations (\ref{15}), (\ref{38}) and (\ref{16}), one gets, by the definition of $\psi_1$ in (\ref{23}),
	\begin{equation*}
		%	\label{17}
		%	\begin{aligned}
			0 = -g(0,0)-k\Lambda(t)+ I_2 + II + III + IV + V.
			%	\end{aligned}
	\end{equation*}
	In other words, as $k>0$,
	%\begin{equation*}
	\begin{align}
		\Lambda(t) &= \frac{1}{k}\left[ -g(0,0)+ I_2 + II + III + IV + V \right] . \label{17}
	\end{align}
	%\end{equation*}
	Next we need to estimate the different terms of (\ref{17}), in particular to show that the above relation is a contraction map for $\Lambda$.
	
	\paragraph{Estimate of $I_2$:} Using the estimate (\ref{Green_estimate}) for the fundamental solution, we have
	\begin{align*}
		|I_2| &= \bigg|\int\displaylimits_0^t {\dot{\Lambda}(\tau) \;\psi_2(\Lambda(\tau),\tau) \int\displaylimits_0^\infty {(\xi^2_\delta(y)-1)\;G(0,t,y,\tau)\;\mathrm{d}y}\;\mathrm{d}\tau}\bigg| \\
		&\le c \int\displaylimits_0^t {|\dot{\Lambda}(\tau) \;\psi_2(\Lambda(\tau),\tau)| \int\displaylimits_\delta^\infty {(t-\tau)^{-1/4}\exp\left(-c\;\frac{y}{(t-\tau)^{1/4}} \right) \;\mathrm{d}y}\;\mathrm{d}\tau} \\
		&\le c \int\displaylimits_0^t {|\dot{\Lambda}(\tau) \;\psi_2(\Lambda(\tau),\tau)| \; \exp\left(-c\;\tfrac{\delta}{(t-\tau)^{1/4}} \right) \;\mathrm{d}\tau}
		\; \le c_\delta \int\displaylimits_0^t {|\dot{\Lambda}(\tau)| \;\mathrm{d}\tau}.
	\end{align*}
	Due to the definition of $\psi_2$ in (\ref{23}), it is a bounded quantity given by the solid $g$. The above constant $c_\delta>0$ %which depends on $\psi_2$ 
	can be made small by choosing $\delta$ and in turn $T$ suitably which makes the exponential term small enough.
	
	\paragraph{Estimate of $II$:} Recalling the definition of $F_2$ from (\ref{F_2}), as it comprises several terms of the form $\dot{\Lambda}\;\partial^k_x U\; \xi'_\delta, k\in [0,3], $ passing the derivatives onto $G$ by integration by parts in $x$, and using estimate (\ref{Green_estimate}) for the fundamental solution, one obtains the following 
	\begin{align*}
		|II| = \bigg| \int\displaylimits_0^t \int\displaylimits_0^\infty {G(0,t,y,\tau)  F_2(y,\tau)\;\mathrm{d}y \;\mathrm{d}\tau}\bigg| \le c_\delta \;\|U\|_{C((0,T_0),C[0,\infty))} \int\displaylimits_0^t {|\dot{\Lambda}(\tau)| (t-\tau)^{-\beta/4} \;\mathrm{d}\tau},
	\end{align*}
	for some $\beta \in (0,4)$. Here we have used the crucial fact that $\text{supp} (F_2) \subset [\delta, 2\delta]$. As before, choosing $T$ suitably, the constant in the above integral can be made small.
	
	\paragraph{Estimate of $III$:} All the terms in (\ref{F_3}) are small. For example, %we have $|\partial_xU|\le \delta$ for $|x|$ small since $\partial_x U|_{x=0} =0 $, thus
	\begin{align*}
		& \quad \bigg| \int\displaylimits_0^t \int\displaylimits_0^\infty {G(0,t,y,\tau)  \;[\dot{\Lambda} \; 3 x^2 \psi_3\;\xi^2_\delta] (y,\tau) \;\mathrm{d}y \;\mathrm{d}\tau}\bigg| \\
		%& \le \left| \int\displaylimits_0^t\dot{\Lambda}(\tau) \psi_3(\Lambda(\tau),\tau) \int\displaylimits_0^\delta 3 y^2 \;\xi_\delta(y) G(0,t,y,\tau) \;\mathrm{d}y \;\mathrm{d}\tau\right| \\
		& \le c\; \delta^2 \int\displaylimits_0^t|\dot{\Lambda}(\tau)| |\psi_3(\Lambda(\tau),\tau)|\int\displaylimits_0^\delta |G(0,t,y,\tau)|\;\mathrm{d}y \; \mathrm{d}\tau  \le c_\delta\; \|\Lambda\|_{C^{1+\frac{\alpha}{4}}[0,T]}.
	\end{align*}
	Similarly, the other two terms can be estimated as,
	\begin{align*}
		& \quad \left| \int\displaylimits_0^t \int\displaylimits_0^\infty {G(0,t,y,\tau)  \;[6(1-\Lambda \xi'_\delta)^4 \;\partial_x ((a\xi_\delta + (1-\xi_\delta) +U)^3 \;\psi_3 \;x^3\;\xi_\delta)  -  \partial_t a\;\xi_\delta] (y,\tau) \;\mathrm{d}y \;\mathrm{d}\tau}\right| \\
		& \le C_\delta(\|\psi_2\|, \|\psi_3\|, \|U(t)\|_{C[0,\infty)}) \;(T+ \|\Lambda\|_{C^{1+\frac{\alpha}{4}}[0,T]}).
	\end{align*}
	Due to the expressions in (\ref{23}), the constant can be chosen small, for suitable $\delta$ and thus $T$.
	
	\paragraph{Estimate of $IV$:} By the same argument as for estimating $I_2$ and also using the fact that $|\partial_x^3 U|\le \delta$ for $|x|$ small as $\partial_x^3U|_{x=0}=0$, we get
	\begin{align*}
		|IV|\le c_\delta (\|U\|_{C((0,T_0),C[0,\infty))})\;(T + \|\Lambda\|_{C^{1+\frac{\alpha}{4}}[0,T]}).
	\end{align*}
	
	%\paragraph{Estimate of $V$:}
	
	One can prove the Lipschitz estimates for each term in (\ref{17}) by the same argument as above. Note that the fundamental solution $G$ depends on $\Lambda$ in a Lipschitz way, and hence the integral $V$.
	
	Therefore, the map (\ref{17}) is contractive on $C^{1+\frac{\alpha}{4}}[0,T]$ which yields the existence of the contact point $\Lambda(t)$ satisfying (\ref{12}).
	\hfill
\end{proof}

\bigskip

Finally existence of a unique local strong solution to the  thin film model (\ref{7}) is obtained by going back to the original variable from Theorem \ref{Th4}. Let us denote by $I = [\Lambda, \infty)$ for better readability.
%we can state the result for the original variables.

\begin{theorem}
	\label{Th2}
	Let $\Lambda_0, h_0 >0$ and $h_0\in C^{4+\alpha}[\Lambda_0, \infty), \alpha\in (0,1)$ with $h_0\to 1$ as $|x|\to\infty$ satisfying the compatibility conditions
	$$
	h_0= g, \quad \partial_{x} h_0 = \partial_x g -k, \quad h_0 \;\partial_{x}^3 h_0 = -6\Lambda_0\frac{\partial_t g}{ g^2} \qquad \text{ at } x=\Lambda_0, t=0.
	$$
	Also, 	$g\in  C^{1+\frac{\alpha}{4}}((0, T_0], C^{3+\alpha}(I) )$.	Then, there exists $T_0 > 0$ which also depends on the initial data such that (\ref{7}) has a unique strong solution $(h,\Lambda)$, with the regularity
	\begin{equation}
		\label{local_estimate1}
		h\in  C^{1+\frac{\alpha}{4}}((0, T_0], C^{4+\alpha}(I) ), \quad
		\Lambda\in C^{1+\frac{\alpha}{4}}[0, T_0].
	\end{equation}
	Furthermore, the solution depends continuously on the initial data.
\end{theorem}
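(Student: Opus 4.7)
The plan is to derive Theorem \ref{Th2} directly from Theorem \ref{Th4} by undoing, in reverse order, the two changes of variable introduced in Section \ref{S3.2}. Starting from the solution $(U,\Lambda)$ of the fixed-domain system (\ref{10})--(\ref{12}) provided by Theorem \ref{Th4}, I would first reconstruct $H$ on $\{\overline{x}>0\}$ via the lifting formula (\ref{transformation}),
\begin{equation*}
H(\overline{x},t) = a(\overline{x},t,\Lambda)\,\xi_\delta(\overline{x}) + U(\overline{x},t) + (1-\xi_\delta(\overline{x})),
\end{equation*}
and check that the boundary values $H=\psi_1$, $\partial_{\overline{x}}H=\psi_2$, $\partial_{\overline{x}}^3 H=\psi_3$ at $\overline{x}=0$, the far-field limit $H\to 1$, and the PDE (\ref{8}) are all satisfied by construction (the first boundary condition encoding exactly the fixed-point relation (\ref{12})). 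Then I would invert the domain map $\overline{x}=Q_\Lambda(x,t)$ and set $h(x,t):=H(Q_\Lambda(x,t),t)$ for $x>\Lambda(t)$. The derivations carried out in Section \ref{S3.2} show, when read backward, that this $h$ together with $\Lambda$ solves (\ref{7}): the three contact-point conditions correspond to the three boundary conditions on $H$ at $\overline{x}=0$, using the explicit form of $\psi_3$ to recover $h\,\partial_x^3 h=-2\Lambda\,\partial_t g/g^2$.

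The second step is to propagate the regularity asserted in Theorem \ref{Th4} through these two transformations. Because $\xi_\delta$ is smooth and compactly supported, and because $a(\overline{x},t,\Lambda)$ depends on $\Lambda(t)$ only through the smooth composition $\overline{x}\psi_2(\Lambda,t)+\overline{x}^3\psi_3(\Lambda,t)$ with $g$ regular enough, the function $H$ inherits the parabolic Hölder regularity $C^{1+\alpha/4}((0,T_0),C^{4+\alpha}[0,\infty))$ of $U$. For the composition $h(x,t)=H(Q_\Lambda(x,t),t)$, one has $\partial_x h = (1-\Lambda\xi'_\delta)\,\partial_{\overline{x}} H$ and $\partial_t h = \partial_t H + \dot\Lambda(\Lambda\xi'_\delta - \xi_\delta)\,\partial_{\overline{x}}H$, and analogous expressions up to the fourth spatial derivative; since $\Lambda\in C^{1+\alpha/4}[0,T_0]$, each of these chain-rule terms lies in the same Hölder class. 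This yields exactly the regularity (\ref{local_estimate1}) on the moving domain $I=[\Lambda(t),\infty)$.

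For uniqueness and continuous dependence I would argue in the opposite direction. If $(h_1,\Lambda_1)$ and $(h_2,\Lambda_2)$ are two strong solutions of (\ref{7}) with the stated regularity arising from the same data, applying $Q_{\Lambda_i}$ and the lifting (\ref{transformation}) to each produces two solutions $(U_i,\Lambda_i)$ of (\ref{10})--(\ref{12}) with the same $U_0$; the uniqueness clause of Theorem \ref{Th4} then forces $(U_1,\Lambda_1)=(U_2,\Lambda_2)$ and hence $h_1=h_2$. Continuous dependence follows the same route, using that the contraction estimates derived for the map (\ref{17}) depend Lipschitz-continuously on $U_0$ and on $g$, and that the two changes of variable are themselves Lipschitz continuous in the relevant Hölder norms. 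The main obstacle I anticipate is the bookkeeping in this last step: one must verify that the Lipschitz constants involved in the composition with $Q_\Lambda$ -- which depend on $\Lambda$ itself -- can be controlled uniformly for pairs of solutions sharing the a priori bound (\ref{18}), so that the transfer of stability estimates from the fixed-domain formulation back to the original moving-domain formulation is genuinely quantitative. This is essentially the same trace-regularity issue flagged in Remark \ref{Rem1}, now applied to differences rather than to a single solution, and it is the reason the Hölder-space setting is indispensable.
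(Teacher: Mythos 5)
Your proposal is correct and follows exactly the route the paper intends: the paper gives no separate proof of Theorem \ref{Th2}, stating only that it is ``obtained by going back to the original variable from Theorem \ref{Th4}'', i.e.\ by undoing the lifting (\ref{transformation}) and the domain map $Q_\Lambda$, which is precisely what you carry out (with more detail on regularity transfer, uniqueness and continuous dependence than the paper itself records).
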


%\bigskip
%\begin{proof}
%Existence of a strong solution to the  thin film model (\ref{7}) is obtained by going back to the original variable from Theorem \ref{Th4}.
%\hfill
%\end{proof}

%\begin{remark}
%It is also possible to consider a smooth parabola instead of the wedge shaped solid in Fig \ref{fig4}-\ref{fig5} and prove corresponding well-posedness result which does not pose any additional difficulty.
%\end{remark}

\section{Stability analysis}
\label{S4}

In this section, we discuss steady state/equilibrium solutions of the model (\ref{7}) and asymptotic stability of the non-linear problem around a steady state.

For the standard thin film equation on an unbounded domain (with slip condition), the conservation of mass usually does not hold. 
Therefore, with our current approach, we consider a periodic setting, or in other words, consecutive solid objects immersed in the liquid. Precisely, we assume that the initial thin film is spatially periodic, i.e. for some $L > 0$,
\begin{equation}
	\label{periodic}
	h_0(x) = h_0(x+2L) \qquad \text{ for all } \ x\in\mathbb{ R}.
\end{equation}
Note that the periodicity assumption is primarily for mathematical simplification
%	it is not possible to consider a fixed lateral boundary
in order to have a bounded domain (and in turn finite mass). It is also possible to consider a fixed lateral boundary, but only with small angle between the free interface and the lateral boundary for the lubrication approximation to be valid. %does not hold in that case any more.
Furthermore, it is enough to assume that the configuration is symmetric with respect to $y = L$.
Thus, we add the condition 
$$
\partial_xh =0 \qquad \text{ at } \quad y=L
$$
and hence, problem (\ref{25}).

\subsection{Conservation of mass}
\label{S4.1}
When the solid is stationary i.e. $\partial_t g = 0$, observe that $(h \partial_x^3h)|_{x= \Lambda}= 0$ in $(\ref{25})_2$. Therefore we have, from the system (\ref{25}),
\begin{align*}
	\frac{\mathrm{d}}{\mathrm{d}t} \int\displaylimits_{\Lambda(t)}^L {h(x,t) \;\mathrm{d}x} = \int\displaylimits_{\Lambda(t)}^L {\partial_t h \;\mathrm{d}x} - \dot{\Lambda}\;h|_{x=\Lambda} &= - \int\displaylimits_{\Lambda(t)}^L {\partial_x(h^3 \partial_x^3 h)\;\mathrm{d}x} - \dot{\Lambda}\;h|_{x=\Lambda}\\
	& = - (h^3 \partial_x^3 h)|_{x=\Lambda} - \dot{\Lambda}\;h|_{x=\Lambda} = - \dot{\Lambda}\;h|_{x=\Lambda}
\end{align*}
and
\begin{equation*}
	\frac{\mathrm{d}}{\mathrm{d}t} \int\displaylimits_0^{\Lambda(t)} {g(x,t)\;\mathrm{d}x} = \int\displaylimits_0^{\Lambda(t)} {\partial_t g \;\mathrm{d}x} + \dot{\Lambda} \;g|_{x=\Lambda}
	= \dot{\Lambda}\;g|_{x=\Lambda}
\end{equation*}
which shows due to $(\ref{25})_2$ that the mass is conserved, i.e. 
\begin{equation}
	\label{mass_conv}
	\frac{\mathrm{d}}{\mathrm{d}t}\bigg( \int\displaylimits_0^{\Lambda(t)} {g(x,t)\;\mathrm{d}x} + \int\displaylimits_{\Lambda(t)}^L {h(x,t) \;\mathrm{d}x}\bigg) = \dot{\Lambda}\;g|_{x=\Lambda} - \dot{\Lambda}\;h|_{x=\Lambda}=0.
\end{equation}

\subsection{Equilibrium solution}
In this subsection, we first compute the equilibrium solution of the system (\ref{25}) for a given liquid volume (which happens to be the same as the steady state), as a constrained minimization problem using the Lagrange multipliers method. Subsequently, we deduce the energy dissipation relation.

For a fixed volume of the liquid
\begin{equation}
	\label{volume}
	V_0 = \int\displaylimits_0 ^{\Lambda(t)} {g\;\mathrm{d}x} + \int\displaylimits_{\Lambda(t)}^L { h \;\mathrm{d}x},
\end{equation}
an equilibrium solution $(\overline{h}, \overline{\Lambda})$ of (\ref{25}) is a minimizer of the total energy $E$, given by
\begin{equation}
	\label{energy}
	E(t) := a \int\displaylimits_0^{\Lambda(t)} {|\partial_xg|^2 \mathrm{d}x} + b \int\displaylimits_{\Lambda(t)}^L {|\partial_x h|^2 \mathrm{d}x} + c \int\displaylimits_{\Lambda(t)}^L {|\partial_xg|^2 \mathrm{d}x},
\end{equation}
where $a, b, c$ are non-negative constants with $b>0$, corresponding to the energy for the liquid-solid, liquid-gas and gas-solid interfaces respectively. Note that energy of the solid-gas interface must be included in the total energy. The total energy of the considered system (\ref{25}) comprises of the kinetic energy of the bulk phases and the surface energies of the three interfaces. The former being negligible compared to the latter one, it reduces to (\ref{energy}).

One can compute the new energy for small changes $(\overline{h} +\delta h, \overline{\Lambda} +\delta \Lambda )$ of the equilibrium state as %where $\varphi = \delta h$,
\begin{align*}
	& \ \ E + \delta E \\
	&= a \int\displaylimits_0^{\overline{\Lambda} +\delta \Lambda} {|\partial_xg|^2 \mathrm{d}x} + b \int\displaylimits_{\overline{\Lambda} +\delta \Lambda}^L {|\partial_x (\overline{h} +\delta h)|^2 \mathrm{d}x} + c \int\displaylimits_{\overline{\Lambda} +\delta \Lambda}^L {|\partial_xg|^2 \mathrm{d}x}\\
	&= E + (a-c)\delta \Lambda  |\partial_xg|^2|_{x=\overline{\Lambda}} - b\; \delta\Lambda |\partial_x(\overline{h} +\delta h)|^2|_{x = \overline{\Lambda}} + b \int\displaylimits_{\overline{\Lambda}}^L {(2 \partial_x \overline{h} \;\partial_x (\delta h)+ |\partial_x (\delta h)|^2) \;\mathrm{d}x}.
\end{align*}
Neglecting the small quadratic terms, one obtains,
\begin{align*}
	\delta E = (a-c)\;\delta \Lambda |\partial_x g|^2|_{x = \overline{\Lambda}}- b\; \delta\Lambda |\partial_x \overline{h}|^2|_{x = \overline{\Lambda}} + 2b \int\displaylimits_{\overline{\Lambda}}^L { \partial_x \overline{h} \;\partial_x (\delta h) \;\mathrm{d}x}.
\end{align*}
Similarly, the new volume becomes,
\begin{equation*}
	V + \delta V = \int\displaylimits_0 ^{\overline{\Lambda} +\delta \Lambda } {g\;\mathrm{d}x} + \int\displaylimits_{\overline{\Lambda} +\delta \Lambda }^L { (\overline{h} + \delta h) \;\mathrm{d}x} = V + \int\displaylimits_{\overline{\Lambda} }^L { \delta h \;\mathrm{d}x}.
\end{equation*}
Hence,
\begin{equation*}
	\delta V = \int\displaylimits_{\overline{\Lambda} }^L { \delta h \;\mathrm{d}x}.
\end{equation*}
Therefore, we need to find a unique Lagrange multiplier  $\lambda\in \mathbb{R}$ such that $\delta E =\lambda \;\delta V$, i.e. after integrating by parts,
\begin{equation}
	\label{26}
	(a-c)\;\delta \Lambda |\partial_xg|^2|_{x=\overline{\Lambda}} - b (\partial_xg|_{x=\overline{\Lambda}} - k)^2 \delta\Lambda - 2b \int\displaylimits_{\overline{\Lambda}}^L { \partial^2_x \overline{h} \; \delta h  \;\mathrm{d}x} -2b (\partial_x \overline{h} \; \delta h )|_{x=\overline{\Lambda}}= \lambda \int\displaylimits_{\overline{\Lambda} }^L { \delta h \;\mathrm{d}x}.
\end{equation}
Here we used the relation $(\ref{25})_2$ since $(\overline{h}, \overline{\Lambda})$ solves the system (\ref{25}).
Choosing $\delta\Lambda = 0$ and $\delta h$ having compact support, one then obtains,
\begin{equation}
	\label{27}
	- 2b \int\displaylimits_{\overline{\Lambda}}^L { \partial^2_x \overline{h} \;\delta h \;\mathrm{d}x}	= \lambda \int\displaylimits_{\overline{\Lambda} }^L { \delta h \;\mathrm{d}x} \qquad \text{ which implies } \quad \partial^2_x\overline{h} = -\frac{\lambda}{2b}, \quad x\in (\overline{\Lambda}, L).
\end{equation}
Also we have the continuity relation satisfied by the perturbed solution,
\begin{equation*}
	(\overline{h} + \delta h) |_{x = \overline{\Lambda} + \delta \Lambda} = g|_{x = \overline{\Lambda} + \delta \Lambda},
\end{equation*}
which gives, neglecting small terms,
\begin{equation*}
	\overline{h}|_{x=\overline{\Lambda}} + \partial_x \overline{h}|_{x=\overline{\Lambda}} \;\delta\Lambda + \delta h|_{x=\overline{\Lambda}} = g|_{x=\overline{\Lambda}} + \partial_x g|_{x=\overline{\Lambda}}\; \delta \Lambda
\end{equation*}
implying, by using $(\ref{25})_2$,
\begin{equation*}
	\delta h|_{x=\overline{\Lambda}} = (\partial_x g - \partial_x \overline{h})|_{x=\overline{\Lambda}} \;\delta\Lambda = k \;\delta\Lambda.
\end{equation*}
Inserting (\ref{27}) into equation (\ref{26}), we get a relation determining the contact angle, 
\begin{equation}
	\label{Young relation}
	b\; k^2 + |\partial_xg|^2 (a-b-c) =0, \qquad \text{or}, \qquad k = \sqrt{\left( 1-\tfrac{a-c}{b}\right)}\; \partial_xg.
\end{equation}
Here we assume the condition $(b+c-a)/b> 0$ so that the above relation is well defined. Also recall that $k>0$ (by construction).
This is a form of Young’s condition which says that the contact angle between the liquid-gas and the liquid-solid interfaces depends only on the slope of the solid and the energies of the interfaces, in other words, only on the material properties, as mentioned in the Introduction under paragraph "Equation (\ref{0_BC})". %Note that we have the condition $k\le \partial_x g$ from the configuration (cf. Figure \ref{fig4}-\ref{fig5}) which further invokes the condition $(a-c)\le b$ for the relation in (\ref{Young relation}) to be meaningful.

From relation (\ref{27}) and the boundary conditions $(\ref{25})_2$ together with $\partial_x \overline{h}|_{x=L} = 0$ (due to the symmetry assumption), we can further determine the equilibrium solution completely,
\begin{equation}
	\label{equil_sol}
	\overline{h} = \tfrac{(\partial_x g-k)|_{x=\overline{\Lambda}}}{2(\overline{\Lambda}-L)} (x-L)^2 + g|_{x=\overline{\Lambda}} - \tfrac{(\partial_x g-k)|_{x=\overline{\Lambda}}}{2} (\overline{\Lambda}-L), \qquad x\in (\overline{\Lambda}, L),
\end{equation}
and also the Lagrange multiplier, in terms of $\overline{\Lambda}$, as
\begin{equation*}
	\lambda = 2b \tfrac{(\partial_x g-k)}{(L-\overline{\Lambda})}.
\end{equation*}
Finally, from the volume constraint (\ref{volume}), the contact point at equilibrium $\overline{\Lambda}$ can be determined.

\paragraph{Energy dissipation:}

The main idea in order to perform the stability analysis in Section \ref{S4.5} is to derive the energy dissipation formula. The existence of a classical solution in Theorem \ref{Th2} in case of the half line, which also applies in case of periodic boundary conditions, allows us to make all the following calculations in a rigorous manner.

When the solid is stationary, it holds from $(\ref{25})_2$ that $h|_{x=\Lambda} = g|_{x=\Lambda} $ for $t>0$ and in turn, we obtain the following relation by differentiating in time
\begin{equation}
	\label{28}
	\partial_t h + \dot{\Lambda} \;\partial_x h = \dot{\Lambda}\;\partial_x g \qquad \text{ which implies } \quad \partial_t h = k\dot{\Lambda} \quad \text{ at } x=\Lambda.
\end{equation}
%The total energy of the system (\ref{25}) reads as
%\begin{equation}
%	\label{energy}
%	E(t) := a \int\displaylimits_0^{\Lambda(t)} {|\partial_xg|^2 \mathrm{d}x} + b \int\displaylimits_{\Lambda(t)}^L {|\partial_x h|^2 \mathrm{d}x} + c \int\displaylimits_{\Lambda(t)}^L {|\partial_xg|^2 \mathrm{d}x}
%\end{equation}
%where $a, b, c$ are non-negative constants with $b>0$, corresponding to the energy for the liquid-solid, liquid-gas and gas-solid interfaces respectively. Note that energy of the solid-gas interface must be included in the total energy.
Therefore, one obtains the following energy dissipation relation from (\ref{energy}) as
\begin{align}
	\frac{\mathrm{d}}{\mathrm{d}t}E &= 2b \int\displaylimits_{\Lambda(t)}^L {\partial_x h\; \partial_{xt}h\;\mathrm{d}x} - b \dot{\Lambda}|\partial_x h|^2|_{x=\Lambda} + (a-c)\dot{\Lambda}|\partial_xg|^2|_{x=\Lambda}  \notag\\
	&= -2b \int\displaylimits_{\Lambda(t)}^L {\partial_x h\; \partial^2_{x}(h^3\partial_x^3h)\;\mathrm{d}x} + \dot{\Lambda} ( (a-c)|\partial_xg|^2 -  b|\partial_x h|^2)|_{x=\Lambda} \hspace{1.5cm} [\text{using } (\ref{25})_1] \notag\\
	&= -2b \int\displaylimits_{\Lambda(t)}^L { h^3| \partial^3_{x}h|^2\;\mathrm{d}x}+2b \;(\partial_x h \;\partial_x(h^3\partial_x^3h))|_{x=\Lambda} + \dot{\Lambda} ( (a-c)|\partial_xg|^2 -  b|\partial_x h|^2)|_{x=\Lambda} \notag\\
	%	&= -2b \int\displaylimits_{\Lambda(t)}^L { h^3| \partial^3_{x}h|^2\;\mathrm{d}x}-2b \;(\partial_x h \;\partial_th)|_{x=\Lambda} + \dot{\Lambda} ( (a-c)|\partial_xg|^2 -  b|\partial_x h|^2)|_{x=\Lambda} \notag\\
	&= -2b \int\displaylimits_{\Lambda(t)}^L { h^3| \partial^3_{x}h|^2\;\mathrm{d}x} + \dot{\Lambda} ( (a-c)|\partial_xg|^2 -  b(\partial_xg-k)^2 - 2kb (\partial_xg-k))|_{x=\Lambda} \notag\\
	&= -2b \int\displaylimits_{\Lambda(t)}^L { h^3| \partial^3_{x}h|^2\;\mathrm{d}x} - \dot{\Lambda} ( (b+c-a)|\partial_xg|^2 |_{x=\Lambda} - bk^2)
	\notag\\
	&= -2b \int\displaylimits_{\Lambda(t)}^L { h^3| \partial^3_{x}h|^2\;\mathrm{d}x} . \label{energy_relation}
\end{align}
We have used relations $(\ref{25})_1$, (\ref{28}) and $(\ref{25})_2$ to obtain the fourth equality whereas Young's condition (\ref{Young relation}) is used at the last line. Furthermore, the condition $\partial_x h|_{x=L} = 0$ (due to the symmetry assumption) has been used in the above deduction.

\paragraph{Steady state:}
For the stationary solution of (\ref{25}), as the time derivative vanishes, $(\ref{25})_1$ reduces to
$$
\partial_x(h^3 \partial_x^3h) = 0 \qquad \text{ in } \ (\Lambda,L),
$$
which means two possibilities: either the thin film is given by a constant height or, a parabola (below we give a complete description, cf. Remark \ref{remark_ss}). The constant thin film corresponds to the case where the slope of the thin film is zero, i.e. $\tilde{\theta} = 0$ (cf. Figure \ref{fig4}) which means $\partial_x g = k$ at $x=\Lambda$. If $\partial_x g \neq k$, then the shape of the thin film is given by a parabola. This indicates that the liquid film might rupture in finite time if the volume of the liquid is small.

Recall that the steady state of the classical thin film equation $\partial_t h + \partial_x(h^2 \partial_x^3h)=0$ on a bounded domain (with slip boundary condition) is indeed given by a parabola. %(away from the contact point) which is the expected behaviour in case of a droplet.
%In case of an unbounded thin film, this poses an issue as in the present scenario.

\begin{remark}
	\label{remark_ss}
	The above energy dissipation relation (\ref{energy_relation}) shows that
	the equilibrium solution $(\overline{h}, \overline{\Lambda})$, given by (\ref{equil_sol}), is also a steady state of (\ref{25}) since it satisfies $\frac{\mathrm{d}}{\mathrm{d}t}E|_{(\overline{h}, \overline{\Lambda})}=0$.
	%for a stationary solution $(h,\Lambda)$ of (\ref{25}), as left hand side of the energy relation (\ref{energy_relation}) vanishes, so does each term on the right hand side. This describes a steady state completely, for a given volume.
\end{remark}

\begin{remark}
	From the relation (\ref{Young relation}) and $(\ref{25})_2$, one obtains that $\partial_x g -\partial_x\overline{h} = \gamma \partial_x g$ at $x=\overline{\Lambda}$ where $\gamma = \sqrt{(b+c-a)/b}$. Depending on $\gamma\ge 1$ or $\gamma<1$, the steady state has a (locally) convex or concave profile, respectively.
\end{remark}

\subsection{Asymptotic stability of steady state}
\label{S4.5}
Finally we are in a situation to study the stability of the stationary solution $(\overline{h}, \overline{\Lambda})$ of (\ref{25}), given by (\ref{equil_sol}).
To this end, we introduce the following linear transformation
\begin{equation*}
	\overline{x} = \tfrac{x(L-\overline{\Lambda}) + L (\overline{\Lambda}-\Lambda)}{(L-\Lambda)}
\end{equation*}
which maps the domain $(\Lambda, L)$ to the fixed domain $(\overline{\Lambda}, L)$.
Differentiating, we get
$$
\partial_x \overline{x} = \tfrac{(L-\overline{\Lambda})}{(L-\Lambda)}, \qquad \partial_t \overline{x} =- \dot{\Lambda} \tfrac{(L-x)(L-\overline{\Lambda})}{(L-\Lambda)^2}.
$$
In this new coordinate, %the system (\ref{25}) becomes, still calling the  functions same, 
%\begin{equation}
%\begin{aligned}
%\partial_{t} h + \left( \frac{L-\overline{\Lambda}}{L-\Lambda}\right) ^4 \partial_{\overline{x}}( h^3 \partial^3_{\overline{x}} h ) =0 \qquad \ &\text{ in } \ \overline{x} \in (\overline{\Lambda}, L), t>0,\\
%h= g, \quad \partial_{\overline{x}} h = \partial_{\overline{x}} g -k, \quad h \;\partial_{\overline{x}}^3 h = 0 \qquad \ &\text{ at } \ \overline{x} = \overline{\Lambda}, t>0,\\
%h = h_0 \qquad &\text{ at } \ t=0, \overline{x} \in (\overline{\Lambda}, L). 
%\end{aligned}
%\end{equation}
let us denote $H(\overline{x},t) = h(x, t)$. Recall that $g$ is independent of $t$ here since we assume that the solid is stationary.

Now consider a small perturbation of the steady state as $H = \overline{h} + \varphi$ where $|\varphi|\ll 1$. 
First of all, existence of a unique solution of (\ref{25}), for some small time $T_0$, in the periodic setting can be obtained as in Theorem \ref{Th2}. Considering spaces on bounded spatial interval $(\overline{\Lambda}, L)$ instead of $(0,\infty)$ in Theorem \ref{Th4} does not pose any extra difficulty.
Therefore, $\varphi$ satisfies the same regularity as in Theorem \ref{Th2}, for suitable initial data. Note that $\varphi$ has vanishing mean due to the mass conservation property (\ref{mass_conv}),
\begin{equation}
	\label{29}
	\langle \varphi \rangle := \int\displaylimits_{\overline{\Lambda}}^L {\varphi\;\mathrm{d} \overline{x}} =0.
\end{equation}
Inserting the above expression of $H$, the perturbation $\varphi$ satisfies,
\begin{equation}
	\label{37}
	\begin{aligned}
		\partial_{t} \varphi + \left( \tfrac{L-\overline{\Lambda}}{L-\Lambda}\right) ^4\partial_{\overline{x}}( (\overline{h}+\varphi)^3 \partial^3_{\overline{x}} \varphi ) = \dot{\Lambda} \tfrac{(L-\overline{x})}{(L-\Lambda)} (\partial_{\overline{x}}\overline{h} + \partial_{\overline{x}}\varphi) \qquad &\text{ in } \ \overline{x} \in (\overline{\Lambda}, L),\\
		\varphi= \psi_1, \qquad \partial_{\overline{x}} \varphi = \psi_2 , \qquad \partial_{\overline{x}}^3 \varphi = 0 \qquad &\text{ at } \ \overline{x} = \overline{\Lambda},\\
		\partial_{\overline{x}} \varphi =0 \qquad &\text{ at } \ \overline{x} = L,\\
		\varphi = \tilde{h}_0 \ \quad &\text{ at } \ t=0,
	\end{aligned}
\end{equation} 
where $\tilde{h}_0(\overline{x}) \equiv h_0(x)$ and 
\begin{equation*}
	\psi_1(t)\equiv g(\Lambda) - g(\overline{\Lambda}), \qquad \psi_2(t) \equiv \left( \tfrac{L-\Lambda}{L-\overline{\Lambda}}\right)  (\partial_x g(\Lambda)-k) - (\partial_x g(\overline{\Lambda})-k). %-\frac{\alpha}{(L-\overline{\Lambda})}(\partial_{\overline{x}} g -k).
\end{equation*}
Furthermore, with help of the energy dissipation relation for the equilibrium solution,
\begin{equation*}
	\frac{\mathrm{d}}{\mathrm{d}t}E|_{(\overline{h}, \overline{\Lambda})} =	\frac{\mathrm{d}}{\mathrm{d}t}\bigg[a \int\displaylimits_0^{\overline{\Lambda}}{|\partial_{\overline{x}} g|^2\;\mathrm{d}\overline{x}} + b \int\displaylimits_{\overline{\Lambda}}^L {|\partial_{\overline{x}}\overline{h}|^2 \;\mathrm{d}\overline{x}} +c \int\displaylimits_{\overline{\Lambda}}^L{|\partial_{\overline{x}} g|^2\;\mathrm{d}\overline{x}}\bigg] =0,
\end{equation*}	
the energy equation (\ref{energy_relation}) results into, (plugging in the change of variable $\overline{x}$ and the expression of $H$)
\begin{equation}
	\label{energy_linearisation}
	\begin{aligned}
		\frac{\mathrm{d}}{\mathrm{d}t}E_\varphi(t) & := \frac{\mathrm{d}}{\mathrm{d}t}\bigg[b \int\displaylimits_{\overline{\Lambda}}^L {|\partial_{\overline{x}}\varphi|^2 \;\mathrm{d}\overline{x} + \frac{{\widetilde{\Lambda}}^2}{(L-\overline{\Lambda})}(|\partial_{\overline{x}} g|^2|_{\overline{x}=\overline{\Lambda}}-k^2) + O({\widetilde{\Lambda}}^3)} \bigg]\\
		& \ =-2b (1+O(\widetilde{\Lambda})) \int\displaylimits_{\overline{\Lambda}}^L {(\overline{h} + \varphi)^3 |\partial_{\overline{x}}^3 \varphi|^2 \;\mathrm{d}\overline{x}},
	\end{aligned}
\end{equation}
where $ \widetilde{\Lambda} \equiv \widetilde{\Lambda}(t) := (\Lambda(t) -\overline{\Lambda}	)$. Note that in the above expression, $(1+O(\widetilde{\Lambda})) >0$ which is important for subsequent analysis. \\

Next let us prove an priori estimate which is essential to prove the stability result.
The ideas used here, in particular in Lemma \ref{Lem1} and Corollary \ref{Cor1}, are similar to that of \cite[Section 4, 5]{JLN}.

\begin{lemma}
	\label{Lem1}
	Any function $\varphi\in H^3(\overline{\Lambda}, L)$, % smooth enough,
	satisfying the boundary conditions $\partial_x \varphi = \frac{(k_1-k_2)}{k_1(L-\overline{\Lambda})}\varphi$ at $x =\overline{\Lambda}$ and $\partial_x\varphi = 0$ at $x = L$ where $k_1, k_2$ are some non-zero constants, and with zero mean value $\langle\varphi\rangle = 0$,
	satisfies the following estimates,
	\begin{equation}
		\label{30}
		\int\displaylimits_{\overline{\Lambda}}^L {|\partial_x \varphi|^2 \;\mathrm{d}x} \le C \int\displaylimits_{\overline{\Lambda}}^L {|\partial_x^3 \varphi|^2 \;\mathrm{d}x},
	\end{equation}
	and
	\begin{equation}
		\label{31}
		|\varphi(\overline{\Lambda})|^2	\le C \int\displaylimits_{\overline{\Lambda}}^L {|\partial_x^3 \varphi|^2 \;\mathrm{d}x}.
	\end{equation}
\end{lemma}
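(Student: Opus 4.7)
My plan is to establish (\ref{30}) by a compactness-contradiction argument (in the spirit of \cite{JLN}), and then to deduce (\ref{31}) as a short corollary. Granting (\ref{30}) for the moment, I would prove (\ref{31}) in one line: the zero-mean hypothesis combined with the standard Poincar\'e inequality gives $\|\varphi\|_{L^2}\le C\|\partial_x\varphi\|_{L^2}$, and the one-dimensional Sobolev embedding $H^1(\overline{\Lambda},L)\hookrightarrow C([\overline{\Lambda},L])$ together with (\ref{30}) yields
\begin{equation*}
|\varphi(\overline{\Lambda})|^2 \,\le\, C\|\varphi\|_{H^1}^2 \,\le\, C'\|\partial_x\varphi\|_{L^2}^2 \,\le\, C''\int_{\overline{\Lambda}}^L|\partial_x^3\varphi|^2\,\mathrm{d}x.
\end{equation*}

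For (\ref{30}) itself I would argue by contradiction, assuming there is a sequence $\{\varphi_n\}\subset H^3(\overline{\Lambda},L)$ satisfying all hypotheses with $\|\partial_x\varphi_n\|_{L^2}=1$ and $\|\partial_x^3\varphi_n\|_{L^2}\to 0$. The preliminary task is a uniform $H^3$-bound on the sequence: $\|\varphi_n\|_{L^2}$ is controlled by Poincar\'e, and for $\|\partial_x^2\varphi_n\|_{L^2}$ I would use the identity
\begin{equation*}
\int_{\overline{\Lambda}}^L|\partial_x^2\varphi_n|^2\,\mathrm{d}x = -\int_{\overline{\Lambda}}^L \partial_x\varphi_n\,\partial_x^3\varphi_n\,\mathrm{d}x - \partial_x\varphi_n(\overline{\Lambda})\,\partial_x^2\varphi_n(\overline{\Lambda}),
\end{equation*}
obtained by integration by parts and the boundary condition $\partial_x\varphi_n(L)=0$. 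Cauchy--Schwarz handles the interior term; for the boundary term, the Robin-type condition at $\overline{\Lambda}$ gives $|\partial_x\varphi_n(\overline{\Lambda})|\le C|\varphi_n(\overline{\Lambda})|\le C\|\varphi_n\|_{H^1}$ (bounded), while a trace estimate provides $|\partial_x^2\varphi_n(\overline{\Lambda})|\le C(\|\partial_x^2\varphi_n\|_{L^2}+\|\partial_x^3\varphi_n\|_{L^2})$, and Young's inequality then absorbs the $\|\partial_x^2\varphi_n\|_{L^2}$ factor on the left.

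With the $H^3$-bound in hand, Rellich--Kondrachov extracts a subsequence $\varphi_n\to\varphi^\star$ strongly in $H^2$, so $\|\partial_x\varphi^\star\|_{L^2}=1$, while the weak convergence $\partial_x^3\varphi_n\rightharpoonup 0$ in $L^2$ forces $\partial_x^3\varphi^\star=0$. Hence $\varphi^\star$ is a polynomial of degree at most two. The boundary and zero-mean conditions pass to the limit, and solving $\partial_x\varphi^\star(L)=0$ together with the zero-mean constraint pins down the form $\varphi^\star(x)=c\bigl((x-L)^2-\tfrac{1}{3}(L-\overline{\Lambda})^2\bigr)$ for some $c\in\mathbb{R}$. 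The Robin condition at $\overline{\Lambda}$ then reduces to a scalar equation in $c$ whose only solution, away from the non-generic algebraic relation $k_2=4k_1$ (which does not arise in the application, where $k_1,k_2$ come from the linearisation around the equilibrium in (\ref{37})), is $c=0$, so $\varphi^\star\equiv 0$, contradicting $\|\partial_x\varphi^\star\|_{L^2}=1$. The main obstacle I anticipate is precisely the uniform $H^2$-bound, since the boundary integration-by-parts term is not manifestly sign-definite; its control hinges on trading $\partial_x\varphi_n(\overline{\Lambda})$ for the bounded trace $\varphi_n(\overline{\Lambda})$ via the Robin condition, combined with the trace estimate on $\partial_x^2\varphi_n(\overline{\Lambda})$ and a careful Young absorption, with a secondary subtlety being the algebraic non-degeneracy of the limit system.
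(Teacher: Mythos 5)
Your proof is correct and follows essentially the same route as the paper: a compactness--contradiction argument for (\ref{30}) (weak $H^3$ / strong $H^2$ convergence of a normalized contradicting sequence, identification of the limit as a quadratic, elimination via the boundary and mean-value conditions), with (\ref{31}) deduced afterwards from Poincar\'e and the trace/embedding. You supply two details the paper leaves implicit: the uniform $\|\partial_x^2\varphi_m\|_{L^2}$ bound (the paper simply asserts boundedness in $H^3$; your integration by parts with the Robin condition and Young absorption is a valid way to get it, as would be a plain interpolation inequality $\|\partial_x^2\varphi\|_{L^2}^2\le C(\|\partial_x\varphi\|_{L^2}^2+\|\partial_x^3\varphi\|_{L^2}^2)$), and, more substantively, the explicit verification that the limiting quadratic must vanish. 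That last computation is a genuine contribution: it shows the conclusion fails precisely when $k_2=4k_1$, a non-degeneracy condition absent from the lemma's statement and from the paper's accompanying remark (which asserts without qualification that the null function is the only one satisfying all the hypotheses); you are right that one must check this relation does not hold for the constants arising from the linearisation.
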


\begin{remark}
	Note that the boundary condition $\partial_x \varphi = \frac{(k_1-k_2)}{k_1(L-\overline{\Lambda})}\varphi$ at $x =\overline{\Lambda}$ is crucial for estimate (\ref{30}) to hold. It is possible to construct a function (e.g. a quadratic polynomial) with zero mean value and symmetric with respect to $x = L$ whose first derivative does not vanish, that is the right hand side of (\ref{30}) vanishes while not the left hand side. On the other hand, the only function which satisfies all the conditions stated in the above lemma is the null function.
\end{remark}

\begin{proof}
	(i) Let us first note that since $\varphi\in H^1(\overline{\Lambda}, L)$, it is absolutely continuous, in particular, it holds that
	\begin{equation*}
		|\varphi(\overline{\Lambda})|^2 \le C \int\displaylimits_{\overline{\Lambda}}^L {|\partial_x \varphi|^2 \;\mathrm{d}x}.
	\end{equation*}
	Thus, (\ref{31}) is a consequence of (\ref{30}).
	
	(ii) We prove the inequality (\ref{30}) by a contradiction argument. Suppose that for each $m\in \mathbb{N}$, there exists $\varphi_m\in H^3(\overline{\Lambda}, L)$ with the conditions
	\begin{equation*}
		\partial_x \varphi_m |_{x=\overline{\Lambda}} = \frac{\lambda}{2k_2 b}\varphi_m\big|_{x=\overline{\Lambda}}, \qquad \partial_x\varphi_m |_{x=L}=0, \quad \text{ and } \quad \langle\varphi_m\rangle =0,
	\end{equation*}
	such that
	\begin{equation}
		\label{32}
		\int\displaylimits_{\overline{\Lambda}}^L {|\partial_x \varphi_m|^2 \;\mathrm{d}x} \ge m \int\displaylimits_{\overline{\Lambda}}^L {|\partial_x^3 \varphi_m|^2 \;\mathrm{d}x}.
	\end{equation}
	Further we may renormalize the sequence as $\|\partial_x \varphi_m\|_{L^2(\overline{\Lambda}, L)} = 1$.
	
	Relation (\ref{32}) shows that $\{\varphi_m\}$ is a bounded sequence in $H^3(\overline{\Lambda}, L)$. In fact, $\partial_x^3 \varphi_m \to 0$ in $L^2(\overline{\Lambda}, L)$. Thus, there exists a subsequence, still denoted by $\{\varphi_m\}$, and a function $\varphi$ such that $\varphi_m \rightharpoonup \varphi$ in $H^3(\overline{\Lambda}, L)$. Due to the compactness of $H^3(\overline{\Lambda}, L)\hookrightarrow H^2(\overline{\Lambda}, L)$, one further has that $\varphi_m \to \varphi$ strongly in $H^2(\overline{\Lambda}, L)$. Therefore, $\varphi$ satisfies,
	\begin{equation}
		\label{33}
		\partial_x \varphi |_{x=\overline{\Lambda}} = \frac{\lambda}{2k_2 b}\varphi\big|_{x=\overline{\Lambda}}, \qquad \partial_x\varphi |_{x=L}=0, \quad \quad \langle\varphi\rangle =0,
	\end{equation}
	and
	\begin{equation}
		\label{34}
		\|\partial_x \varphi_m\|_{L^2(\overline{\Lambda}, L)} =1.
	\end{equation}
	On the other hand, one has from (\ref{32}) that $\partial_x^3 \varphi = 0$ in $L^2(\overline{\Lambda}, L)$. This implies, together with the boundary conditions and the vanishing mean (\ref{33}), that $\varphi\equiv 0$ in $ (\overline{\Lambda}, L)$ which is a contradiction to (\ref{34}).
	\hfill
\end{proof}

\bigskip

Now we can establish the asymptotic stability of the energy corresponding to (\ref{25}).

\begin{corollary}
	\label{Cor1}	
	Let $(\overline{h}, \overline{\Lambda})$ be a steady state of (\ref{25}), given by (\ref{equil_sol}).
	There exists $\varepsilon>0$ such that for any initial data $\tilde{h}_0 \in C^{4+\alpha}(\overline{\Lambda}, L), \alpha\in (0,1)$ and $\Lambda_0 \in\mathbb{R}$ with $|\Lambda_0-\overline{\Lambda}| + \|\tilde{h}_0 - \overline{h}\|_{C^1 (\overline{\Lambda}, L)}\le \varepsilon $, a solution $(h, \Lambda) $ of (\ref{25}) satisfies, for  $t\in(0,T_0)$,
	\begin{equation}
		\label{22}
		|\Lambda-\overline{\Lambda}|\le Ce^{-\omega t} \qquad \text{ and } \qquad \|H- \overline{h}\|_{H^1 (\overline{\Lambda}, L)}\le Ce^{-\omega t} \quad \text{ for some } \omega>0.
	\end{equation}
\end{corollary}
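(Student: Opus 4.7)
The plan is to combine the energy dissipation identity (\ref{energy_linearisation}) with the Poincar\'e-type inequalities of Lemma \ref{Lem1} to close a Gronwall-type argument on a suitable energy functional, and then to extend the local solution from Theorem \ref{Th2} to all times by a continuation/bootstrap argument. Throughout I would work with the perturbation $\varphi = H - \overline{h}$ on the fixed domain $(\overline{\Lambda},L)$ governed by (\ref{37}), and set $\widetilde{\Lambda}(t) := \Lambda(t)-\overline{\Lambda}$.

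First, I would Taylor-expand the boundary data $\psi_1,\psi_2$ in (\ref{37}) in powers of $\widetilde{\Lambda}$ around $\widetilde{\Lambda}=0$, obtaining at leading order
\begin{equation*}
\varphi(\overline{\Lambda},t) \;=\; \partial_x g(\overline{\Lambda})\,\widetilde{\Lambda} + O(\widetilde{\Lambda}^2), \qquad \partial_{\overline{x}}\varphi(\overline{\Lambda},t) \;=\; c_2\,\widetilde{\Lambda} + O(\widetilde{\Lambda}^2),
\end{equation*}
for an explicit constant $c_2$ depending on $g$, $k$, $\overline{\Lambda}$, $L$. Since $\partial_x g(\overline{\Lambda})\neq 0$, this gives a linear relation $\partial_{\overline{x}}\varphi(\overline{\Lambda}) = \tfrac{c_2}{\partial_x g(\overline{\Lambda})}\varphi(\overline{\Lambda})+O(\widetilde{\Lambda}^2)$ that matches the hypothesis of Lemma \ref{Lem1} (modulo harmless higher order corrections absorbable for $\widetilde{\Lambda}$ small). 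Combined with the zero mean $\langle\varphi\rangle=0$ from mass conservation (\ref{29}), Lemma \ref{Lem1} yields
\begin{equation*}
\|\partial_{\overline{x}}\varphi\|_{L^2(\overline{\Lambda},L)}^2 + |\varphi(\overline{\Lambda})|^2 \;\le\; C\,\|\partial_{\overline{x}}^3\varphi\|_{L^2(\overline{\Lambda},L)}^2,
\end{equation*}
and in particular $\widetilde{\Lambda}^2 \lesssim |\varphi(\overline{\Lambda})|^2 \lesssim \|\partial_{\overline{x}}^3\varphi\|_{L^2}^2$. Plugging this into the dissipation identity (\ref{energy_linearisation}), and using that $\overline{h}+\varphi$ is bounded away from $0$ for small perturbations, the right-hand side of (\ref{energy_linearisation}) is bounded above by $-c_0\,\|\partial_{\overline{x}}^3\varphi\|_{L^2}^2$. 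On the other hand the left-hand side $E_\varphi$ is comparable to $\|\partial_{\overline{x}}\varphi\|_{L^2}^2+\widetilde{\Lambda}^2$ (the indefinite term $\frac{\widetilde{\Lambda}^2}{L-\overline{\Lambda}}(|\partial_x g|^2-k^2)$ is controlled by the $\|\partial_{\overline{x}}\varphi\|_{L^2}^2$ piece once $|\widetilde{\Lambda}|$ is small). Hence we get a differential inequality
\begin{equation*}
\frac{d}{dt}E_\varphi(t) \;\le\; -\omega\, E_\varphi(t),
\end{equation*}
for some $\omega>0$, valid as long as $|\widetilde{\Lambda}(t)|$ stays below a threshold $\varepsilon_0$. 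Gronwall then gives $E_\varphi(t)\le E_\varphi(0)\,e^{-\omega t}$.

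To pass from the local solution of Theorem \ref{Th2} to a global solution I would use a standard continuation argument: the exponential decay of $E_\varphi$ implies $|\widetilde{\Lambda}|+\|\varphi\|_{H^1}$ stays strictly smaller than its initial size for all $t>0$ in the interval of existence, hence well below the threshold $\varepsilon_0$; in particular the hypotheses of Theorem \ref{Th2} are again satisfied at time $T_0$ with the same quantitative constants, so the local solution can be restarted and the argument iterated to cover $[0,\infty)$. Converting back through $E_\varphi \gtrsim \|\varphi\|_{H^1}^2 + \widetilde{\Lambda}^2$ gives (\ref{22}).

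The main obstacle I expect is the \emph{coercivity of $E_\varphi$}: the term $\tfrac{\widetilde{\Lambda}^2}{L-\overline{\Lambda}}(|\partial_x g|^2|_{\overline{\Lambda}}-k^2)$ may be negative (its sign depends on $a-c$ in Young's relation (\ref{Young relation})), so $E_\varphi$ is \emph{a priori} an indefinite quadratic form in $(\partial_{\overline{x}}\varphi,\widetilde{\Lambda})$. Recovering coercivity requires a quantitative comparison of the constant in Lemma \ref{Lem1} with the coefficient of the $\widetilde{\Lambda}^2$ term, together with smallness of the $O(\widetilde{\Lambda}^3)$ remainder; both demand that we carry out the Taylor expansion above with explicit control of constants and restrict to a neighborhood where this competition is won by the good term. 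Secondary technical points, such as handling the $(1+O(\widetilde{\Lambda}))$ prefactor in (\ref{energy_linearisation}) and the time-dependent coefficients arising from the change of variable $\overline{x}$, are taken care of by the same smallness of $\widetilde{\Lambda}$ preserved by the continuous-induction argument.
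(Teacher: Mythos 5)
Your argument is essentially the paper's proof: you combine the dissipation identity (\ref{energy_linearisation}) with the inequalities of Lemma \ref{Lem1} (using the boundary data to get $|\widetilde{\Lambda}|\lesssim|\varphi(\overline{\Lambda})|$, which requires $\partial_x g(\overline{\Lambda})\neq 0$, guaranteed by $k>0$ and Young's relation (\ref{Young relation})) to obtain $\frac{\mathrm{d}}{\mathrm{d}t}E_\varphi\le -C E_\varphi$ and conclude by Gronwall, and your remarks on the coercivity of $E_\varphi$ address a point the paper leaves implicit. Note only that the continuation/bootstrap step at the end is not needed for this corollary, which is asserted just on the local existence interval $(0,T_0)$; the global extension is the content of Theorem \ref{Th3}.
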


\begin{proof}
	%	First of all note that existence of a local in time solution of (\ref{25}) in the periodic setting can be obtained as given by Theorem \ref{Th2}. Considering spaces on bounded spatial interval $(\overline{\Lambda}, L)$ instead of $[0,\infty)$ in Theorem \ref{Th4} do not pose any extra difficulty.
	%	The next step is to show global in time existence of such solution for a small initial perturbation of steady state.				
	The right hand side of (\ref{energy_linearisation}) can be estimated, since $|\varphi|\ll 1$, as
	\begin{equation*}
		2b (1+O(\widetilde{\Lambda})) \int\displaylimits_{\overline{\Lambda}}^L {(\overline{h} + \varphi)^3 |\partial_{\overline{x}}^3 \varphi|^2 \;\mathrm{d}\overline{x}} \ge C \int\displaylimits_{\overline{\Lambda}}^L {\left( \frac{\overline{h} }{2}\right) ^3 |\partial_{\overline{x}}^3 \varphi|^2 \;\mathrm{d}\overline{x}} \ge C \;\min_{\overline{x}} \overline{h}\;\int\displaylimits_{\overline{\Lambda}}^L { |\partial_{\overline{x}}^3 \varphi|^2 \;\mathrm{d}\overline{x}}.
	\end{equation*}
	Recall that in the above expression, $(1+O(\widetilde{\Lambda})) >0$ as obtained in the deduction of (\ref{energy_linearisation}).	Combining with the estimates in Lemma \ref{Lem1}, we get that
	\begin{equation}
		\label{35}
		2b (1+O(\widetilde{\Lambda})) \int\displaylimits_{\overline{\Lambda}}^L {(\overline{h} + \varphi)^3 |\partial_{\overline{x}}^3 \varphi|^2 \mathrm{d}\overline{x}} \ge C E_\varphi(t),
	\end{equation}
	where the energy for the perturbation $E_\varphi$ is defined in (\ref{energy_linearisation}). Therefore (\ref{35}) yields that
	\begin{equation*}
		\frac{\mathrm{d}}{\mathrm{d}t}E_\varphi \le -C E_\varphi \qquad \text{ for } \quad 0\le t\le T_0
	\end{equation*}
	which finally gives,
	\begin{equation*}
		E_\varphi(t) \le \exp(-Ct )E_\varphi(0) \qquad \text{ for } \quad 0\le t\le T_0.
	\end{equation*}
	In particular, one gets for the full energy defined by (\ref{energy}),
	\begin{equation*}
		E(t) \le \exp(-Ct )E(0) \qquad \text{ for } \quad 0\le t\le T^*.
	\end{equation*}
	%	Also, parabolic estimates for example (\ref{22}) or (\ref{23}) provides control over higher order terms
	%	\begin{equation*}
		%		\|\varphi\|_{H^3} \le C \|\varphi\|_{H^1}.
		%	\end{equation*}
	This completes the proof.
	\hfill
\end{proof}
\bigskip

Next, %we prove two important results which are essential 
to obtain the global in time existence result, we follow the path as in Section \ref{S3.3}. To begin with, a suitable estimate on the fundamental solution for the linear problem corresponding to (\ref{37}) can be obtained as before. Here onwards, the bar over $x$ is omitted.

\begin{lemma}
	\label{Lem2}
	Let $\tilde{h}_0\in C^{4+\alpha}(\overline{\Lambda},L)$ for $\alpha\in (0,1)$.		There exists a fundamental solution $G(x,t,\xi,\tau)$ of the linear problem
	\begin{equation}
		\label{36}
		\begin{aligned}
			\partial_{t} \varphi + \left( \tfrac{L-\overline{\Lambda}}{L-\Lambda}\right) ^4\partial_{x}( \overline{h}^3 \partial^3_{x} \varphi ) - \dot{\Lambda} \tfrac{(L-x)}{(L-\Lambda)} \partial_{x}\varphi =0 \qquad \ &\text{ in } \ x \in (\overline{\Lambda}, L), t>0,\\
			\partial_{x} \varphi = \psi_2, \qquad \partial_{x}^3 \varphi = 0 \qquad \ &\text{ at } \ x = \overline{\Lambda}, t>0,\\
			\partial_{x} \varphi =0 \qquad \ &\text{ at } \ x = L, t>0,\\
			\varphi = \tilde{h}_0 \ \qquad &\text{ at } \ t=0, x \in (\overline{\Lambda}, L),
		\end{aligned}
	\end{equation}
	satisfying the estimate
	\begin{equation}
		\label{21}
		|\partial^m G(x,t, \xi,\tau)|\le c_m (t-\tau)^{-\frac{m+1}{4}} \exp\left\lbrace -c \left( \tfrac{|x-y|}{(t-\tau)^{1/4}}\right) ^{4/3} \right\rbrace , \quad t\in [0,T] .
	\end{equation}
\end{lemma}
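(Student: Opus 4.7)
The plan is to mirror the strategy used in the proof of Lemma \ref{Lem4}, invoking the Eidelman machinery for fourth-order parabolic equations, with the necessary adaptations to the bounded interval $(\overline{\Lambda},L)$ with two-sided boundary conditions. First, I would reduce to a problem with zero initial datum by subtracting (a smooth extension of) $\tilde{h}_0$; since $\tilde{h}_0\in C^{4+\alpha}(\overline{\Lambda},L)$ and by hypothesis the compatibility conditions hold at $t=0$, the resulting equation acquires a source term of Hölder class but has null initial data. Next, I would lift the inhomogeneous Neumann-type condition $\partial_x\varphi=\psi_2$ at $x=\overline{\Lambda}$ by subtracting, e.g., $\frac{\psi_2(t)}{2(\overline{\Lambda}-L)}(x-L)^2$, which satisfies $\partial_x=\psi_2$ at $\overline{\Lambda}$ and $\partial_x=0$ at $L$; thereby the problem reduces to one with homogeneous boundary conditions and an additional right-hand side depending smoothly on $\psi_2(t)$.

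The second block of the proof is to verify that the hypotheses of Eidelman's theorem apply. The principal coefficient is $a_4(x,t) = \bigl(\tfrac{L-\overline{\Lambda}}{L-\Lambda(t)}\bigr)^4 \overline{h}(x)^3$, which is smooth in $x$ (since $\overline{h}$ is the explicit polynomial (\ref{equil_sol})) and, since $\overline{h} > 0$ on $[\overline{\Lambda},L]$, is bounded below and above away from zero when $\Lambda$ remains close to $\overline{\Lambda}$; in time, $a_4$ inherits $C^{\alpha/4}$-regularity from $\Lambda\in C^{1+\alpha/4}[0,T]$. The third-order coefficient $a_3=\bigl(\tfrac{L-\overline{\Lambda}}{L-\Lambda}\bigr)^4 \partial_x(\overline{h}^3)$ is smooth in $x$ and $C^{\alpha/4}$ in $t$, and the first-order coefficient $a_1(x,t)=-\dot\Lambda(t)\,(L-x)/(L-\Lambda(t))$ is smooth in $x$ and $C^{\alpha/4}$ in $t$. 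The boundary operators $(\partial_x,\partial_x^3)$ at $\overline{\Lambda}$ and $\partial_x$ at $L$ form a complementing (Lopatinskii–Shapiro) system for the fourth-order operator $\partial_t+\partial_x(a_4\partial_x^3 \cdot)$, as one checks directly by freezing coefficients.

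With these ingredients in place, one applies \cite[Chapter IV.2, Theorem 3.4]{Eidelman69}, whose statement extends from the half-line to a bounded interval by constructing a parametrix out of half-line fundamental solutions near each of the two endpoints $\overline{\Lambda}$ and $L$, glued together by a cut-off partition of unity and corrected via Levi's method using the interior pointwise estimate (\ref{21}). Existence of $G$ and the desired Gaussian-type bound (\ref{21}) then follow; the estimate is intrinsic to the fourth-order parabolic scaling, once the principal coefficient is bounded away from zero and Hölder continuous.

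The main obstacle is the adaptation from the half-line version used in Lemma \ref{Lem4} to the present two-sided boundary setting, because the cited theorem is typically formulated for problems on a half-space: one must either repeat the Levi parametrix construction with a second boundary contribution, or argue that since the Gaussian kernel in (\ref{21}) decays super-algebraically in $|x-y|/(t-\tau)^{1/4}$ the influence of one boundary on the other is controlled by a $C^\infty$-smooth correction, which is absorbed into the parametrix. Once this gluing is justified, the remainder of the argument — coefficient regularity and the covering condition — is the same as in Lemma \ref{Lem4}.
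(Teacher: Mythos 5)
Your proposal is correct and follows essentially the same route as the paper: both reduce the problem to the framework of \cite[Chapter IV.2, Theorem 3.4]{Eidelman69} by verifying H\"older continuity of the coefficients $a_4$, $a_3$, $a_1$ (using $\Lambda\in C^{1+\frac{\alpha}{4}}$ and the smoothness and positivity of $\overline{h}$) and checking that the boundary operators $(\partial_x,\partial_x^3)$ fit the admissible class, exactly as in Lemma \ref{Lem4}. The additional steps you spell out (reduction to zero initial data, lifting of the inhomogeneous Neumann datum $\psi_2$, the complementing condition, and the gluing of half-line parametrices for the two-endpoint setting) are standard elaborations of the same argument that the paper leaves implicit.
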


\begin{proof}
	The result follows from \cite[Chapter IV.2, Theorem 3.4]{Eidelman69}, as in Lemma \ref{Lem4}.
	As per the notations in \cite{Eidelman69}, for our system (\ref{36}), $N=1=n, b=2, r_1 =1, r_2 =3$ and the boundary conditions read as $B\equiv (\partial_{x}, \partial_{x}^3)u|_{x=\overline{\Lambda}}=f\equiv (\psi_2, 0)$. In $(\ref{36})_1$, the leading order coefficient $a_4(x,t) = \left( \frac{L-\overline{\Lambda}}{L-\Lambda}\right) ^4 \overline{h}^3$ is H\"older continuous in both $t\in (0,T_0)$ and $x\in (0,L)$, since Theorem \ref{Th2} gives that $\Lambda \in C^{1+\frac{\alpha}{4}}[0,T_0]$ and $\overline{h}$ is smooth. Furthermore, the lower order coefficients $a_3(x,t) = \left( \frac{L-\overline{\Lambda}}{L-\Lambda}\right) ^4 \partial_x(\overline{h}^3)$ and $a_1(x,t) =\dot{\Lambda} \left( \frac{L-x}{L-\Lambda}\right)$ are H\"older continuous in $x$ as well. Hence, all conditions of \cite[Chapter IV.2, Theorem 3.4]{Eidelman69} are satisfied.
	\hfill
\end{proof}

\begin{theorem}
	\label{Th3}
	Let $(\overline{h}, \overline{\Lambda})$ be a steady state of (\ref{25}) and $\tilde{h}_0 \in C^{4+\alpha}(\overline{\Lambda}, L), \alpha\in (0,1)$ and $\Lambda_0 \in\mathbb{R}$. There exists $\varepsilon>0$ such that for any initial data satisfying $|\Lambda_0-\overline{\Lambda}| + \|\tilde{h}_0 - \overline{h}\|_{C^{4+\alpha} (\overline{\Lambda}, L)}\le \varepsilon $, a unique global solution $(h, \Lambda) $ of the problem (\ref{25}) exists, with regularity given by Theorem \ref{Th2}, and satisfies, for all $t>0$,
	\begin{equation*}
		|\Lambda-\overline{\Lambda}|\le Ce^{-\omega t} \qquad \text{ and } \qquad \|H- \overline{h}\|_{C^{4+\alpha} (\overline{\Lambda}, L)}\le Ce^{-\omega t} \quad \text{ for some } \omega>0.
	\end{equation*}
\end{theorem}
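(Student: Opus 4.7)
The plan is to combine the local well-posedness statement (Theorem \ref{Th2}, adapted to the periodic/bounded-interval setting as already remarked in Section \ref{S4.5}) with the exponential decay of the energy obtained from the a priori estimate in Corollary \ref{Cor1}, and then close a continuation argument by bootstrapping the $H^1$ decay up to $C^{4+\alpha}$ decay via the Green function representation for the linearized problem from Lemma \ref{Lem2}.

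First I would fix a small threshold $\varepsilon_0>0$, smaller than the size of perturbation for which the local existence theorem applies and for which $\overline{h}+\varphi$ stays uniformly positive on $(\overline{\Lambda},L)$, and define
\begin{equation*}
T^{\ast}:=\sup\Bigl\{\,t\ge 0 : (h,\Lambda)\text{ exists on }[0,t]\text{ with }|\Lambda-\overline{\Lambda}|+\|\varphi(\cdot,s)\|_{C^{4+\alpha}}\le \varepsilon_0\text{ for all }s\in[0,t]\Bigr\}.
\end{equation*}
By the local existence result (Theorem \ref{Th2}) and continuity in time, $T^{\ast}>0$ provided $\varepsilon$ in the hypothesis is chosen smaller than $\varepsilon_0$. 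On $[0,T^{\ast})$ the coefficient $(\overline{h}+\varphi)^3$ is uniformly bounded below and above, so the energy identity (\ref{energy_linearisation}) is rigorously justified and Lemma \ref{Lem1} applies to $\varphi$ (noting that the boundary conditions of $\varphi$ at $\overline{\Lambda}$ and $L$ in (\ref{37}), together with the zero mean (\ref{29}), are exactly of the form required by the lemma, up to the small correction $\widetilde{\Lambda}$). This yields
\begin{equation*}
\frac{d}{dt}E_{\varphi}(t)\le -C\,E_{\varphi}(t),\qquad 0\le t< T^{\ast},
\end{equation*}
hence $E_{\varphi}(t)\le e^{-Ct}E_{\varphi}(0)$, which in turn gives, via (\ref{29}) and the Poincaré-type estimate (\ref{31}), the decay $\|\varphi(t)\|_{H^1}+|\Lambda(t)-\overline{\Lambda}|\le C e^{-\omega t}$ with $\omega=C/2$.

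Next I would upgrade this $H^1$ decay to $C^{4+\alpha}$ decay. Writing (\ref{37}) in Duhamel form with respect to the Green function $G$ provided by Lemma \ref{Lem2},
\begin{equation*}
\varphi(x,t)=\int_{\overline{\Lambda}}^{L}\!\!G(x,t,\xi,0)\,\tilde h_{0}(\xi)\,d\xi+\int_{0}^{t}\!\!\int_{\overline{\Lambda}}^{L}\!\!G(x,t,\xi,\tau)\,\mathcal N[\varphi,\dot\Lambda](\xi,\tau)\,d\xi\,d\tau,
\end{equation*}
where $\mathcal N$ collects the quasilinear nonlinearity $\partial_x(((\overline h+\varphi)^3-\overline h^3)\,\partial_x^{3}\varphi)$, the correction coming from $((L-\overline\Lambda)/(L-\Lambda))^4-1$, and the transport term $\dot\Lambda(L-x)/(L-\Lambda)\partial_x\varphi$, together with an inhomogeneous boundary contribution analogous to the $I$--$V$ terms in the proof of Theorem \ref{Th4}. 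The Gaussian-type bound (\ref{21}) on the derivatives of $G$ together with parabolic interpolation will give an estimate of the form
\begin{equation*}
\|\varphi(t)\|_{C^{4+\alpha}}\le C\,e^{-\omega_{1}t}\|\tilde h_{0}\|_{C^{4+\alpha}}+C\!\!\int_{0}^{t}\!(t-\tau)^{-\frac{4+\alpha}{4}}e^{-\omega_{1}(t-\tau)}\bigl(\|\varphi(\tau)\|_{C^{4+\alpha}}+|\dot\Lambda(\tau)|\bigr)\bigl(\|\varphi(\tau)\|_{C^{1}}+|\widetilde\Lambda(\tau)|\bigr)\,d\tau.
\end{equation*}
Using the already-obtained $H^1$ (hence $C^0$ by Sobolev embedding in one dimension, upgradable to $C^1$ by interpolation with the uniform $C^{4+\alpha}$ bound $\varepsilon_{0}$) exponential decay for the second factor in the integrand, a standard Gronwall-type argument applied in a weighted exponential norm $\sup_{t\ge 0}e^{\omega t}\|\varphi(t)\|_{C^{4+\alpha}}$ yields $\|\varphi(t)\|_{C^{4+\alpha}}\le Ce^{-\omega t}$ for a possibly smaller $\omega>0$.

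With this in hand the continuation argument closes: if $T^{\ast}$ were finite, the estimate above would give $\|\varphi(T^{\ast})\|_{C^{4+\alpha}}+|\Lambda(T^{\ast})-\overline\Lambda|\le C\varepsilon<\varepsilon_{0}$ provided $\varepsilon$ is small enough, contradicting maximality by one more application of local existence. Hence $T^{\ast}=\infty$ and the decay holds globally. The main obstacle I foresee is the bootstrap from $H^1$ decay (provided by the energy method) to decay in $C^{4+\alpha}$: the Duhamel integral involves a $(t-\tau)^{-(4+\alpha)/4}$ singularity, so one must interpolate carefully between the low-regularity exponential decay and the uniform-in-time $C^{4+\alpha}$ bound coming from the local existence theorem, and one must also track how the Green function from Lemma \ref{Lem2}—whose coefficients depend on $\Lambda(t)$—depends continuously on the slow motion of $\Lambda$, just as in the Lipschitz estimate for $V$ at the end of the proof of Theorem \ref{Th4}.
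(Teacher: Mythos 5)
Your proposal follows essentially the same route as the paper: exponential decay of the low norm via the energy dissipation relation (\ref{energy_linearisation}) together with Lemma \ref{Lem1} (i.e.\ Corollary \ref{Cor1}), control of the $C^{4+\alpha}$ norm by $|\Lambda-\overline{\Lambda}|+\|\varphi\|_{H^1}$ through the Duhamel representation (\ref{20}) with the Green function of Lemma \ref{Lem2}, and a continuation argument to reach $T=\infty$. If anything, you are more explicit than the paper about the delicate point — the non-integrable $(t-\tau)^{-(4+\alpha)/4}$ singularity when bootstrapping to the top-order norm, which the paper handles only implicitly by integrating by parts onto $G$ and asserting the final bound $\|\varphi\|_{C^{4+\alpha}}\le C(|\widetilde{\Lambda}|+\|\varphi\|_{H^1})$ — so your interpolation/weighted-norm remedy is a reasonable way to fill in the same step.
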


\begin{proof}
	Theorem \ref{Th2} provides existence of $\varphi$ as a unique solution of the quasilinear problem
	\begin{equation}
		\label{37.}
		\begin{aligned}
			\partial_{t} \varphi + \left( \tfrac{L-\overline{\Lambda}}{L-\Lambda}\right) ^4\partial_{x}( (\overline{h}+\varphi)^3 \partial^3_{x} \varphi ) = \dot{\Lambda} \tfrac{(L-x)}{(L-\Lambda)} (\partial_{x}\overline{h} + \partial_{x}\varphi) \qquad \ &\text{ in } \ x \in (\overline{\Lambda}, L), t>0,\\
			\partial_{x} \varphi = \psi_2, \quad \partial_{x}^3 \varphi = 0 \qquad \ &\text{ at } \ x = \overline{\Lambda}, t>0,\\
			\partial_{x} \varphi =0 \qquad \ &\text{ at } \ x = L, t>0,\\
			\varphi = \tilde{h}_0 \qquad &\text{ at } \ t=0, x \in (\overline{\Lambda}, L),
		\end{aligned}
	\end{equation}
	in some interval $[0,T_0]$, which in addition matches the boundary condition $\varphi|_{x=\overline{\Lambda}} = \psi_1$, and is given by the following integro-differential form,
	\begin{equation}
		\label{20}
		\varphi(x,t) = \int\displaylimits_0^t \int\displaylimits_{\overline{\Lambda}}^L G(x,t,y,\tau) \left[ F_1 + F_2 \right](y,\tau)\;\mathrm{d}y \;\mathrm{d}\tau.
	\end{equation}
	Here $G$ is the fundamental solution of the linear problem given in Lemma \ref{Lem2} and
	\begin{equation*}
		F_1 = \left( \tfrac{L-\overline{\Lambda}}{L-\Lambda}\right) ^4 \partial_x ((\varphi^3 + 3\overline{h}\; \varphi\;(\overline{h}+ \varphi))\;\partial_x^3 \varphi), \qquad F_2 = \dot{\Lambda}\; \tfrac{(L-x)}{(L-\Lambda)} \;\partial_{x}\overline{h}.
	\end{equation*}
	
	Since the assumptions of the local existence result (eg. \cite[Chapter III.4, Theorem 8.3]{Eidelman69}) are satisfied, there exists a unique solution $\varphi_1$ of the Cauchy problem $\varphi_1|_{t=T_0} = \varphi(x, T_0)$ for equation (\ref{37.}), which is defined for $t\in (T_0, T_1)$ where $T_1>T_0$ and belongs to $C^{4+\alpha}(\overline{\Lambda},L)$. In turn, also $\Lambda\in C^{1+\frac{\alpha}{4}}[T_0, T_1]$. The solution can be continued in this way.
	
	Now observe that, 
	\begin{align*}
		I & = \bigg| \int\displaylimits_0^t \int\displaylimits_{\overline{\Lambda}}^L G(x,t,y,\tau) F_1 (y,\tau)\;\mathrm{d}y \;\mathrm{d}\tau\bigg| \\
		& = \bigg| \int\displaylimits_0^t {\left( \tfrac{L-\overline{\Lambda}}{L-\Lambda}\right) ^4(\tau)} \int\displaylimits_{\overline{\Lambda}}^L G(x,t,y,\tau) \partial_x ((\varphi^3 + 3\overline{h}\; \varphi\;(\overline{h}+ \varphi))\;\partial_x^3 \varphi) (y,\tau)\;\mathrm{d}y \;\mathrm{d}\tau\bigg| \\
		& = \bigg| \int\displaylimits_0^t {\left( \tfrac{L-\overline{\Lambda}}{L-\Lambda}\right) ^4(\tau)} \int\displaylimits_{\overline{\Lambda}}^L \partial_x G(x,t,y,\tau) (\varphi^3 + 3\overline{h}\; \varphi\;(\overline{h}+ \varphi))\;\partial_x^3 \varphi (y,\tau)\;\mathrm{d}y \;\mathrm{d}\tau\bigg|\\
		& \le c(\|\varphi\|_{C((0,t),C^{\frac{1}{2}}(\overline{\Lambda},L))})
		\int\displaylimits_0^t {\left( \tfrac{L-\overline{\Lambda}}{L-\Lambda}\right) ^4(\tau)} \int\displaylimits_{\overline{\Lambda}}^L |\partial_x G(x,t,y,\tau)| \;\mathrm{d}y \;\mathrm{d}\tau\\
		& \le c(\|\varphi\|_{C((0,t),C^{\frac{1}{2}}(\overline{\Lambda},L))})
		\int\displaylimits_0^t {\left( \tfrac{L-\overline{\Lambda}}{L-\Lambda}\right) ^4(\tau) (t-\tau)^{-\frac{1}{4}}\;\mathrm{d}\tau}.
	\end{align*}
	In the above we used integration by parts to pass all the derivatives over $G$ and then use estimate (\ref{21}) for $m=1$ and the regularity of $\overline{h}$ and $\varphi$. Similarly, we have
	\begin{align*}
		II & = \bigg| \int\displaylimits_0^t \int\displaylimits_{\overline{\Lambda}}^L G(x,t,y,\tau) F_2 (y,\tau)\;\mathrm{d}y \;\mathrm{d}\tau\bigg| \\
		& = \bigg| \int\displaylimits_0^t {\tfrac{\dot{\Lambda}}{(L-\Lambda)}(\tau)} \int\displaylimits_{\overline{\Lambda}}^L G(x,t,y,\tau) \; (L-y) \;\partial_{x}\overline{h} (y)\;\mathrm{d}y \;\mathrm{d}\tau\bigg| \\
		& \le c \int\displaylimits_0^t {\left| \dot{\Lambda}\tfrac{(L-\overline{\Lambda})}{(L-\Lambda)}(\tau)\right| } \int\displaylimits_{\overline{\Lambda}}^L |G(x,t,y,\tau)| \;\mathrm{d}y \;\mathrm{d}\tau\\
		& \le c \int\displaylimits_0^t { |\dot{\Lambda}(\tau)|\tfrac{(L-\overline{\Lambda})}{(L-\Lambda)} \;\mathrm{d}\tau} \le c \int\displaylimits_0^t { |\dot{\Lambda}(\tau)| (1 + \tfrac{\widetilde{\Lambda}}{(L-\overline{\Lambda})}+ O({\widetilde{\Lambda}}^2)) \;\mathrm{d}\tau}.
	\end{align*}
	%This shows that the moduli of the coefficients and their derivatives and their H\"older and Lipschitz constants are small, if the initial data  is close enough to the steady state.
	Therefore, from expression (\ref{20}), we can conclude
	\begin{equation*}
		\|\varphi\|_{C^{4+\alpha}(\overline{\Lambda},L)} \le C \left( |\widetilde{\Lambda}(t)| + \|\varphi\|_{C^{\frac{1}{2}} (\overline{\Lambda}, L)}\right)
		\le C \left( |\Lambda-\overline{\Lambda}| + \|\varphi\|_{H^1 (\overline{\Lambda}, L)}\right).
	\end{equation*}
	%The above local in time solution can be continued for all time $t>0$ essentially follows from the fact that all the constants on which $T_0$ depends remain small.
	Thus, the local solution can be continued up to time $T=\infty$ in the solution space, as long as the initial data stays close enough. %$|\Lambda_0-\overline{\Lambda}| + \|h_0 - \overline{h}\|_{C^{4+\alpha} (\overline{\Lambda}, L)}\le \varepsilon $ is satisfied.
	This concludes the proof.
	\hfill
\end{proof}

\bigskip

\textbf{Acknowledgements.} The authors gratefully acknowledge the financial support of the
collaborative research centre "The mathematics of emerging effects" (CRC 1060,
Project-ID 211504053) and the Hausdorff Center for Mathematics (EXC 2047/1,
Project-ID 390685813), both of them funded through the Deutsche
Forschungsgemeinschaft (DFG, German Research Foundation). %The funders had no role in study design, analysis, decision to publish, or preparation of the	manuscript.
%The authors acknowledge the support of the Hausdorff Center for Mathematics at the University of Bonn, funded by the Deutsche Forschungsgemeinschaft (DFG) through the Collaborative Research Centre "The mathematics of emerging effects" (CRC 1060, Project-ID 211504053).
\\

The authors certify that they do not have any conflict of interest.

{\small
	%\bibliographystyle{plain}
	%\bibliography{thin_film}
	%\begin{comment}

}
%\end{comment}
\end{document}